\documentclass[a4paper,11pt]{article}

\usepackage[multidot]{grffile}
\usepackage{latexsym,amssymb,enumerate,amsmath,epsfig,amsthm}
\usepackage[margin=1in]{geometry}
\usepackage{setspace,color}
\usepackage{graphicx}
\usepackage[ruled]{algorithm2e}
\usepackage{empheq}
\usepackage{bm}
\usepackage{subcaption}
\usepackage{booktabs,array,multirow}
\usepackage{hyperref}
\hypersetup{
    colorlinks=true,
    linkcolor=blue,
    filecolor=magenta,
    urlcolor=cyan,
    citecolor=red,
    pdfpagemode=FullScreen,
}

\newcolumntype{C}{>{$\displaystyle}c<{$}}

\DeclareMathOperator*{\argmin}{argmin}

\newcommand{\bR}{\mathbb{R}}

\newcommand{\Log}{\operatorname{Log}}
\newcommand{\Exp}{\operatorname{Exp}}
\newcommand{\SLERP}{\operatorname{SLERP}}
\newcommand{\SIDER}{\operatorname{SIDER}}

\DeclareMathOperator{\Gr}{Gr}
\DeclareMathOperator{\St}{St}
\DeclareMathOperator{\spanop}{span}
\DeclareMathOperator{\diag}{diag}
\DeclareMathOperator{\tr}{tr}

\newcommand{\cY}{\mathcal{Y}}

\newcommand{\norm}[1]{\left\lVert #1 \right\rVert}
\newcommand{\inner}[2]{\left\langle #1,#2 \right\rangle}

\newcommand{\GLERP}{\operatorname{GLERP}}
\newcommand{\GIDER}{\operatorname{GIDER}}

\numberwithin{equation}{section}

\theoremstyle{plain}
\newtheorem{thm}{Theorem}[section]
\newtheorem{prop}[thm]{Proposition}
\newtheorem{lemma}[thm]{Lemma}

\theoremstyle{definition}
\newtheorem{definition}[thm]{Definition}
\newtheorem{assump}[thm]{Assumption}

\theoremstyle{remark}
\newtheorem{remark}[thm]{Remark}

\title{Geodesic Interpolation on the Grassmann Manifold:\\ GLERP and Recursive GIDER Interpolants}
\author{
Shingyu Leung\thanks{Department of Mathematics, the Hong Kong University of Science and Technology, Clear Water Bay, Hong Kong. Email: {\bf masyleung@ust.hk}}
}
\date{}

\begin{document}

\maketitle

\begin{abstract}
This article develops a geodesic interpolation framework for data on the Grassmann manifold.  The motivation is that many matrix-valued data sets represent subspaces rather than fixed bases: if the columns of two matrices differ only by a right orthogonal transformation, then they describe the same point on \(\Gr(r,m)\).  Interpolation should therefore be invariant under this basis ambiguity.  We first introduce \(\GLERP\), a Grassmann analogue of spherical linear interpolation defined by the Grassmann exponential and logarithm maps.  The method follows the constant-speed geodesic joining two nearby subspaces and is second-order accurate for smooth Grassmann-valued curves under the usual normal-neighborhood condition.  We then define \(\GIDER_n\), a recursive higher-order construction obtained by replacing each affine interpolation step in Neville's algorithm by \(\GLERP\).  The resulting interpolant matches \(n+1\) subspace data exactly, is basis-invariant, and is locally of order \(n+1\) for sufficiently smooth curves.  We compare the construction with tangent-space interpolation and projection-matrix interpolation, discuss intrinsic and extrinsic error measures, and present numerical tests.  The results confirm the expected convergence orders and show that \(\GIDER_n\) and tangent-space interpolation are nearly indistinguishable in a smooth local regime, while projection-matrix interpolation provides a useful extrinsic baseline.  We also outline how the recursive construction can be used as the basis of a Grassmann ENO procedure.
\end{abstract}

\section{Introduction}

Interpolation of constrained or geometry-valued data arises in many areas of scientific computing, numerical analysis, computer vision, data analysis, and reduced-order modeling.  When the data lie on a nonlinear manifold, direct interpolation in an ambient Euclidean space may destroy the relevant geometric structure.  A familiar example is interpolation on the unit sphere, where spherical linear interpolation, or \(\SLERP\), provides the constant-speed shortest geodesic between two points.  In the spherical interpolation framework of Fong and Leung \cite{FongLeung2023}, \(\SLERP\) is used as a geometric replacement for Euclidean linear interpolation and is incorporated into higher-order and essentially non-oscillatory constructions.  The guiding principle is that a Euclidean interpolation procedure can often be transferred to a nonlinear manifold by replacing affine segments with geodesic segments.

The present work develops this principle for the Grassmann manifold \(\Gr(r,m)\), the space of all \(r\)-dimensional linear subspaces of \(\bR^m\).  This setting is natural when the data are matrices whose column spaces, rather than their particular entries, carry the relevant information.  If \(Y\in\bR^{m\times r}\) has orthonormal columns, then \(Y\) and \(YR\), for any \(R\in O(r)\), represent the same subspace.  The object to be interpolated is therefore not the matrix representative itself, but the Grassmann point \(\spanop(Y)\).  Any meaningful interpolation formula and any meaningful error measure should be invariant under the right orthogonal action \(Y\mapsto YR\).  This rules out naive entrywise interpolation of matrix representatives as a generally valid Grassmann interpolation procedure.

Several interpolation strategies for Grassmann-valued data are already available or arise naturally from the geometry.  A widely used approach is tangent-space interpolation, in which the data are mapped by a logarithm map to the tangent space at a selected reference subspace, interpolated there by ordinary polynomial interpolation, and mapped back by the exponential map.  This idea is common in parametric reduced-order modeling and related applications \cite{AmsallemFarhat2011}.  Another basis-invariant approach represents each subspace by its orthogonal projector \(P=YY^T\), interpolates the projectors in the ambient vector space of symmetric matrices, and then projects the result back to \(\Gr(r,m)\) by taking the leading \(r\) eigenvectors.  A third family of methods is based on recursive geodesic constructions, including manifold versions of Neville--Aitken interpolation \cite{NoakesHeinzingerPaden1989,MosqueraEtAl2019}.  These methods are closely related in a small smooth regime, but they differ in their use of tangent coordinates, their dependence on auxiliary choices, their numerical cost, and their behavior near the boundary of a normal neighborhood.  A detailed comparison of the three approaches considered in this work is given in Section~\ref{sec:comparison_three_methods}.

The first contribution of this article is to define a Grassmann analogue of \(\SLERP\), which we call \(\GLERP\).  For two nearby Grassmann points \(\mathcal{Y}_0,\mathcal{Y}_1\in\Gr(r,m)\), it is defined by
\[
    \GLERP(\mathcal{Y}_0,\mathcal{Y}_1;\tau)
    =
    \Exp_{\mathcal{Y}_0}
    \bigl(
        \tau\Log_{\mathcal{Y}_0}(\mathcal{Y}_1)
    \bigr),
    \qquad 0\le \tau\le 1.
\]
When the logarithm map is uniquely defined, this is the constant-speed geodesic joining the two subspaces.  In normal coordinates, \(\GLERP\) reduces to ordinary linear interpolation along a tangent vector.  We prove that \(\GLERP\) is second-order accurate for smooth Grassmann-valued curves under the standard normal-neighborhood assumption.

The second contribution is a recursive family of higher-order Grassmann interpolants, denoted by \(\GIDER_n\).  The construction follows Neville interpolation: starting from \(n+1\) data subspaces, each affine interpolation step is replaced by \(\GLERP\).  In particular, \(\GIDER_2\) is obtained by applying \(\GLERP\) to two lower-order \(\GLERP\) curves and may be viewed as a Grassmann analogue of quadratic Lagrange interpolation.  More generally, \(\GIDER_n\) interpolates \(n+1\) Grassmann data points exactly and reduces to the classical polynomial interpolant when the manifold is replaced by a Euclidean vector space.  This construction is distinct from the control-point form of \(\SIDER_2\) in \cite{FongLeung2023}; here the emphasis is on a recursive Lagrange--Neville type interpolation on \(\Gr(r,m)\).

The construction is local.  The Grassmann logarithm is single-valued only in an injectivity neighborhood, and the recursive formulas may involve geodesic extrapolation.  For the canonical Grassmann metric, this admissibility condition can be monitored by principal angles: the relevant subspaces, including intermediate subspaces generated by the recursion, should remain bounded away from configurations with a principal angle equal to \(\pi/2\).  We give below a quantitative sufficient condition showing that, for fixed interpolation order, the principal angles appearing in all recursive \(\GLERP\) steps are \(\mathcal{O}(h)\) when the sampled data are \(\mathcal{O}(h)\)-close to a reference subspace.  Thus sufficiently dense sampling keeps the construction uniformly away from the cut locus.

The main analytical result is a local accuracy theorem.  Under the normal-neighborhood assumption, \(\GIDER_n\) is locally \((n+1)\)-st order accurate for \(C^{n+1}\) Grassmann-valued curves.  The proof is a manifold version of the classical interpolation-error argument.  In a local coordinate chart, the recursive interpolant is smooth, agrees with the exact curve at \(n+1\) interpolation nodes, and the coordinate error is controlled by the product of the nodal factors.  The final estimate is stated intrinsically in terms of the Grassmann geodesic distance.  We also discuss projection-matrix errors and Procrustes-aligned errors, both of which are invariant under changes of orthonormal bases.

The rest of the paper is organized as follows.  Section~\ref{sec:background} recalls the necessary geometry of the Grassmann manifold, including principal angles, geodesic distance, and the exponential and logarithm maps.  Section~\ref{sec:glerp} introduces \(\GLERP\) and proves its second-order accuracy.  Section~\ref{sec:gider} defines the recursive \(\GIDER_n\) construction and establishes its interpolation property.  Section~\ref{sec:accuracy} proves the local \((n+1)\)-st order accuracy theorem.  Section~\ref{sec:error} discusses basis-invariant error measurements.  Section~\ref{sec:comparison_three_methods} compares \(\GIDER_n\) with tangent-space interpolation and projection-matrix interpolation.  Section~\ref{sec:numerical_example} presents numerical tests, Section~\ref{sec:geno} outlines a Grassmann ENO extension, and Section~\ref{sec:conclusion} concludes the paper.

\section{Background}\label{sec:background}

The purpose of this section is to recall the basic geometric ingredients needed for interpolation on the Grassmann manifold.  Unlike Euclidean interpolation, where data points can be added and scaled directly, interpolation on \(\Gr(r,m)\) must respect the fact that each point represents an \(r\)-dimensional subspace of \(\bR^m\).  Thus a matrix \(Y\in\bR^{m\times r}\) with orthonormal columns is only one representative of the subspace \(\spanop(Y)\), and the representatives \(Y\) and \(YR\), \(R\in O(r)\), are equivalent.  Any meaningful interpolation formula and error measure should therefore be invariant under this change of basis.  We summarize below the standard representation of tangent vectors, the Grassmann distance through principal angles, and the exponential and logarithm maps.  These tools provide the intrinsic replacement for vector addition and scalar multiplication and will be used to define the Grassmann linear interpolation \(\GLERP\) and its recursive higher-order extensions.

\subsection{Geometry of the Grassmann Manifold}\label{sec:grassmann}

We briefly recall the geometric structure of the Grassmann manifold needed for the construction and analysis of the proposed interpolants. Let \(\Gr(r,m)\) denote the set of all \(r\)-dimensional linear subspaces of \(\bR^m\). A point on \(\Gr(r,m)\) is a subspace rather than a particular matrix. It is convenient to represent such a point by an orthonormal basis \(Y\in\bR^{m\times r}\), \(Y^T Y=I_r\), and we write \(\cY=\spanop(Y)\). This representation is non-unique: for any orthogonal matrix \(R\in O(r)\), the matrices \(Y\) and \(YR\) represent the same subspace. Thus \(\Gr(r,m)\) may be viewed as the quotient of the Stiefel manifold \(\St(r,m)=\{Y\in\bR^{m\times r}:Y^TY=I_r\}\) by the right action of \(O(r)\). Equivalently, a Grassmann point may be represented by the orthogonal projector \(P=YY^T\), which is invariant under the change of basis \(Y\mapsto YR\).

For \(\cY=\spanop(Y)\), the tangent space may be identified with the set of matrices
\[
    T_{\cY}\Gr(r,m)=\{\Delta\in\bR^{m\times r}:Y^T\Delta=0\}.
\]
This representation corresponds to infinitesimal changes of the subspace that are orthogonal to the current basis. We use the canonical metric induced by the Frobenius inner product, namely \(\inner{\Delta_1}{\Delta_2}=\tr(\Delta_1^T\Delta_2)\) for \(\Delta_1,\Delta_2\in T_{\cY}\Gr(r,m)\). The corresponding norm is \(\norm{\Delta}_F=\inner{\Delta}{\Delta}^{1/2}\). All interpolation errors below are measured intrinsically on \(\Gr(r,m)\), and therefore do not depend on the particular matrix bases chosen to represent the subspaces.

Given two subspaces \(\cY_0=\spanop(Y_0)\) and \(\cY_1=\spanop(Y_1)\), their relative position is characterized by the principal angles \(\theta_1,\ldots,\theta_r\in[0,\pi/2]\). These angles are obtained from the singular value decomposition
\[
    Y_0^T Y_1=A\cos\Theta B^T,
    \qquad
    \Theta=\diag(\theta_1,\ldots,\theta_r).
\]
The Grassmann geodesic distance is
\[
    d_{\Gr}(\cY_0,\cY_1)=\norm{\Theta}_F
    =\left(\sum_{j=1}^r\theta_j^2\right)^{1/2}.
\]
An equivalent basis-invariant error measure is obtained from the associated projectors \(P_i=Y_iY_i^T\). If \(\theta_j\) are the principal angles between the two subspaces, then
\[
    \frac{1}{\sqrt2}\norm{P_0-P_1}_F
    =\left(\sum_{j=1}^r\sin^2\theta_j\right)^{1/2}.
\]
Hence the projection-matrix error and the intrinsic Grassmann distance are locally equivalent when the subspaces are close.

The logarithm map on \(\Gr(r,m)\) is single-valued only inside a normal neighborhood. In the present setting, this requires the relevant principal angles to be strictly smaller than \(\pi/2\). This condition is the Grassmann analogue of the small-sector condition used in spherical or quaternionic interpolation to avoid non-uniqueness of the shortest geodesic. Throughout the construction, we assume that the sampled subspaces are sufficiently close so that all logarithms and geodesic interpolations are well-defined.

\subsection{Exponential and Logarithm Maps}

The exponential and logarithm maps provide the intrinsic replacement for linear displacement on \(\Gr(r,m)\). Let \(\Delta\in T_{\cY}\Gr(r,m)\), and compute its compact singular value decomposition \(\Delta=U\Theta V^T\), where \(U^TU=V^TV=I\) and \(\Theta\) is diagonal with nonnegative entries. The Grassmann exponential map is
\[
    \Exp_{\cY}(\Delta)=\spanop\left(YV\cos\Theta+U\sin\Theta\right).
\]
Equivalently, one may use \(Z=(YV\cos\Theta+U\sin\Theta)V^T\) as a matrix representative of the resulting subspace, followed by a QR factorization if a numerically orthonormal basis is desired. This formula shows that a geodesic starting from \(\cY=\spanop(Y)\) in the tangent direction \(\Delta=U\Theta V^T\) rotates the basis \(YV\) toward the orthogonal directions \(U\), with angular speeds given by the diagonal entries of \(\Theta\).

Conversely, let \(\cY_0=\spanop(Y_0)\) and \(\cY_1=\spanop(Y_1)\) be sufficiently close so that \(Y_0^TY_1\) is invertible. Define \(A=Y_1(Y_0^TY_1)^{-1}-Y_0\). Since \(Y_0^TA=0\), this matrix lies in the tangent representation at \(\cY_0\). If \(A=U\Sigma V^T\) is its compact singular value decomposition, then the Grassmann logarithm is
\[
    \Log_{\cY_0}(\cY_1)=U\arctan(\Sigma)V^T.
\]
The singular values of \(\Log_{\cY_0}(\cY_1)\) are precisely the principal angles between \(\cY_0\) and \(\cY_1\), and therefore \(\norm{\Log_{\cY_0}(\cY_1)}_F=d_{\Gr}(\cY_0,\cY_1)\). This identity will be used repeatedly in the error analysis, because it allows the interpolation error to be measured either as a Grassmann distance or as the norm of a tangent vector in normal coordinates.

For two nearby points \(\cY_0,\cY_1\in\Gr(r,m)\), the geodesic joining them can now be written as \(\gamma(\tau)=\Exp_{\cY_0}(\tau\Log_{\cY_0}(\cY_1))\), \(0\le \tau\le 1\). This is the basic Grassmann analogue of spherical linear interpolation. In the following sections, we refer to this geodesic interpolation as \(\GLERP\) and use it as the building block for higher-order Grassmann interpolation schemes.

\section{GLERP: Grassmann Linear Interpolation}\label{sec:glerp}

We now introduce the geodesic interpolation that will serve as the basic building block for the higher-order Grassmann interpolants. This construction is the direct analogue of spherical linear interpolation, with the exponential and logarithm maps on \(\Gr(r,m)\) replacing the quaternionic exponential and logarithm. Since a Grassmann point represents a subspace rather than a particular basis, all definitions below are intrinsic and are invariant under the replacement \(Y\mapsto YR\), where \(R\in O(r)\).

\begin{definition}[Grassmann linear interpolation]\label{def:glerp}
Let \(\mathcal{Y}_0,\mathcal{Y}_1\in\Gr(r,m)\) be two Grassmann points such that \(\mathcal{Y}_1\) lies in the injectivity neighborhood of \(\mathcal{Y}_0\). The Grassmann linear interpolation, denoted by \(\GLERP\), is defined by
\[
    \GLERP(\mathcal{Y}_0,\mathcal{Y}_1;\tau)
    =
    \Exp_{\mathcal{Y}_0}\bigl(\tau\Log_{\mathcal{Y}_0}(\mathcal{Y}_1)\bigr),
    \qquad 0\le \tau\le 1.
\]
\end{definition}

The injectivity-neighborhood condition ensures that the logarithm map is uniquely defined and that the interpolation follows the minimizing geodesic. In terms of principal angles, a sufficient local condition is that all principal angles between the two subspaces are strictly smaller than \(\pi/2\). Under this condition, GLERP satisfies the endpoint interpolation conditions \(\GLERP(\mathcal{Y}_0,\mathcal{Y}_1;0)=\mathcal{Y}_0\) and \(\GLERP(\mathcal{Y}_0,\mathcal{Y}_1;1)=\mathcal{Y}_1\). Moreover, it moves with constant speed along the geodesic,
\[
    d_{\Gr}\bigl(\GLERP(\mathcal{Y}_0,\mathcal{Y}_1;s),
    \GLERP(\mathcal{Y}_0,\mathcal{Y}_1;t)\bigr)
    =
    |s-t|\,d_{\Gr}(\mathcal{Y}_0,\mathcal{Y}_1),
\]
whenever the same minimizing geodesic remains valid.

To obtain a computable formula, let \(Y_0\) and \(Y_1\) be orthonormal representatives of \(\mathcal{Y}_0\) and \(\mathcal{Y}_1\), respectively. If \(\Log_{\mathcal{Y}_0}(\mathcal{Y}_1)=U\Theta V^T\) is the compact singular value decomposition of the tangent vector, then a representative of the interpolated subspace is
\[
    Z(\tau)
    =
    \bigl(Y_0V\cos(\tau\Theta)+U\sin(\tau\Theta)\bigr)V^T.
\]
Thus \(\GLERP(\mathcal{Y}_0,\mathcal{Y}_1;\tau)=\spanop(Z(\tau))\). In exact arithmetic, \(Z(\tau)\) has orthonormal columns; in floating point computations, a final QR orthonormalization may be applied.

\begin{algorithm}
\caption{GLERP by exponential and logarithm maps}
\KwIn{Matrices \(Y_0,Y_1\in\bR^{m\times r}\) representing two nearby subspaces; parameter \(\tau\in[0,1]\)}
\KwOut{An orthonormal basis \(Z\) for \(\GLERP(\spanop(Y_0),\spanop(Y_1);\tau)\)}
Orthonormalize \(Y_0\) and \(Y_1\), if necessary\;
Compute \(M=Y_0^TY_1\)\;
Check that \(M\) is nonsingular; otherwise the logarithm is not uniquely determined by this local formula\;
Set \(A=Y_1M^{-1}-Y_0\)\;
Compute the compact SVD \(A=U\Sigma V^T\)\;
Set \(\Theta=\arctan(\Sigma)\)\;
Set \(Z=(Y_0V\cos(\tau\Theta)+U\sin(\tau\Theta))V^T\)\;
Orthonormalize \(Z\) by QR if necessary\;
\end{algorithm}

\subsection{Second-Order Accuracy of GLERP}

We next show that GLERP is second-order accurate for smooth Grassmann-valued curves. The result is local: the sampled points must remain in a normal neighborhood so that the logarithm map is single-valued. This is the same geometric restriction that will later appear in the higher-order construction, where the recursive interpolants may also generate intermediate subspaces.

\begin{prop}[Second-order accuracy of GLERP]\label{prop:glerp_second_order}
Let \(\gamma:[0,T]\to\Gr(r,m)\) be a \(C^2\) curve. Let \(t_i=ih\), \(\mathcal{Y}_i=\gamma(t_i)\), and define, for \(t=t_i+\tau h\) with \(0\le \tau\le 1\),
\[
    \widehat{\gamma}_h(t)
    =
    \GLERP(\mathcal{Y}_i,\mathcal{Y}_{i+1};\tau).
\]
Assume that \(h\) is sufficiently small so that \(\gamma([t_i,t_{i+1}])\), \(\mathcal{Y}_i\), and \(\mathcal{Y}_{i+1}\) lie in a common normal coordinate neighborhood. Then there exists a constant \(C\), independent of \(h\) and \(\tau\), such that
\[
    d_{\Gr}\bigl(\gamma(t_i+\tau h),
    \widehat{\gamma}_h(t_i+\tau h)\bigr)
    \le
    Ch^2,
    \qquad 0\le \tau\le 1.
\]
\end{prop}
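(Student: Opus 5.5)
We work in normal coordinates centered at \(\mathcal{Y}_i\) and reduce the claim to the classical linear-interpolation error estimate. Set \(\phi=\Log_{\mathcal{Y}_i}\), defined on the normal neighborhood in the hypothesis, and define the coordinate curve \(c(t)=\phi(\gamma(t))\in T_{\mathcal{Y}_i}\Gr(r,m)\) for \(t\in[t_i,t_{i+1}]\). By construction \(c(t_i)=0\) and \(c(t_{i+1})=\Log_{\mathcal{Y}_i}(\mathcal{Y}_{i+1})\). Definition~\ref{def:glerp} then gives
\[
    \phi\bigl(\widehat{\gamma}_h(t_i+\tau h)\bigr)=\tau\,c(t_{i+1})=(1-\tau)c(t_i)+\tau c(t_{i+1}).
\]
So in these coordinates GLERP is exactly the affine interpolant of the two nodal values of \(c\). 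This identity uses only that \(\Exp_{\mathcal{Y}_i}\) and \(\Log_{\mathcal{Y}_i}\) are mutually inverse on the neighborhood.

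Since \(\gamma\) is \(C^2\) and \(\Log_{\mathcal{Y}_i}\) is smooth on the normal neighborhood, \(c\) is \(C^2\) in the vector space \(T_{\mathcal{Y}_i}\Gr(r,m)\). The standard interpolation remainder, applied componentwise or in integral form to avoid the vector-valued mean value issue, gives
\[
    \norm{c(t_i+\tau h)-(1-\tau)c(t_i)-\tau c(t_{i+1})}_F\le \tfrac{1}{2}\tau(1-\tau)h^2\sup_{[t_i,t_{i+1}]}\norm{c''}_F\le \tfrac18 h^2\sup\norm{c''}_F .
\]

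The main obstacle is making the constant uniform. The chart changes with \(i\), so \(\sup\norm{c''}\) must be bounded independently of \(i\) and \(h\). For this we write
\[
    c''=D^2\Log_{\mathcal{Y}_i}[\dot\gamma,\dot\gamma]+D\Log_{\mathcal{Y}_i}[\ddot\gamma].
\]
By compactness of \(\gamma([0,T])\), the quantities \(\dot\gamma\) and \(\ddot\gamma\) are bounded, measured for instance in the projector embedding \(P=YY^T\). The map \((\mathcal{Y},\mathcal{Z})\mapsto\Log_{\mathcal{Y}}(\mathcal{Z})\) is smooth on the set where all principal angles are at most some \(\theta_*<\pi/2\), and the first two derivatives of this map are bounded on that compact set. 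For small \(h\), the segment \(\gamma([t_i,t_{i+1}])\) lies within such a set uniformly in \(i\), by uniform continuity of \(\gamma\).

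It remains to convert the coordinate error back to the Grassmann distance. The inverse chart \(\Exp_{\mathcal{Y}_i}\) is Lipschitz on a ball of fixed radius, with a constant uniform in \(i\) by the homogeneity of \(\Gr(r,m)\) under \(O(m)\). Indeed \(d_{\Gr}(\Exp_{\mathcal{Y}}(u),\Exp_{\mathcal{Y}}(v))\le L\norm{u-v}_F\), since \(d\Exp\) is bounded on compact sets and the metric is the induced one. Combining this Lipschitz bound with the coordinate estimate yields \(d_{\Gr}\le \tfrac{L}{8}h^2\sup\norm{c''}_F=Ch^2\), uniformly in \(\tau\in[0,1]\) and in \(i\).
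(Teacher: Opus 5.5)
Your proposal is correct and follows the paper's proof essentially step by step: normal coordinates centered at \(\mathcal{Y}_i\), \(\GLERP\) as the straight-line interpolant of the coordinate curve \(\Log_{\mathcal{Y}_i}\gamma\), the standard \(\tfrac12 s(h-s)\max\norm{\xi''}\) remainder, and a Lipschitz bound for \(\Exp_{\mathcal{Y}_i}\). Your additional argument that \(\sup\norm{c''}\) and the Lipschitz constant are uniform in \(i\) goes beyond the paper, which leaves this implicit; it uses compactness of the set where \(\theta_{\max}\le\theta_*<\pi/2\) and the \(O(m)\)-homogeneity of \(\Gr(r,m)\).
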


\begin{proof}
Fix an interval \([t_i,t_{i+1}]\) and set \(\mathcal{P}=\gamma(t_i)\). In normal coordinates centered at \(\mathcal{P}\), define \(\xi(s)=\Log_{\mathcal{P}}\gamma(t_i+s)\) for \(0\le s\le h\). Then \(\xi\) is a \(C^2\) curve in the vector space \(T_{\mathcal{P}}\Gr(r,m)\), with \(\xi(0)=0\) and \(\xi(h)=\Log_{\mathcal{P}}\gamma(t_i+h)\). By the definition of GLERP, the interpolant satisfies
\[
    \widehat{\gamma}_h(t_i+s)=\Exp_{\mathcal{P}}\ell(s),
    \qquad
    \ell(s)=\frac{s}{h}\xi(h).
\]
Thus, in normal coordinates, GLERP is simply the straight-line interpolant of the tangent-space curve \(\xi\) between \(s=0\) and \(s=h\).

The standard interpolation estimate for a \(C^2\) vector-valued function gives
\[
    \norm{\xi(s)-\ell(s)}
    \le
    \frac{s(h-s)}{2}
    \max_{0\le \eta\le h}\norm{\xi''(\eta)}
    \le
    Ch^2,
    \qquad 0\le s\le h.
\]
Since the curve remains in a compact subset of a normal coordinate neighborhood, the exponential map is smooth and locally Lipschitz on the relevant tangent-space ball. Hence
\[
    d_{\Gr}\bigl(\Exp_{\mathcal{P}}\xi(s),
    \Exp_{\mathcal{P}}\ell(s)\bigr)
    \le
    C\norm{\xi(s)-\ell(s)}
    \le
    Ch^2.
\]
Using \(\Exp_{\mathcal{P}}\xi(s)=\gamma(t_i+s)\) and \(\Exp_{\mathcal{P}}\ell(s)=\widehat{\gamma}_h(t_i+s)\) proves the desired estimate.
\end{proof}

\begin{remark}[Leading error term]
If \(\gamma\) is \(C^3\), the preceding argument also gives the local expansion
\[
    \xi(\tau h)-\ell(\tau h)
    =
    -\frac{1}{2}\tau(1-\tau)h^2\xi''(0)
    +
    \mathcal{O}(h^3).
\]
Thus the leading GLERP error is governed by the second derivative of the exact curve in normal coordinates. In particular, if \(\gamma\) is itself a Grassmann geodesic, then \(\xi(s)=sV\) for some fixed tangent vector \(V\), and GLERP is exact.
\end{remark}

\begin{remark}[Projection-matrix error]
If \(Y(t)\) and \(\widehat{Y}_h(t)\) are orthonormal representatives of \(\gamma(t)\) and \(\widehat{\gamma}_h(t)\), respectively, the same accuracy may be measured by the associated projectors \(P(t)=Y(t)Y(t)^T\) and \(\widehat{P}_h(t)=\widehat{Y}_h(t)\widehat{Y}_h(t)^T\). Since the projection-matrix distance and the Grassmann geodesic distance are locally equivalent, the estimate above implies
\[
    \norm{P(t)-\widehat{P}_h(t)}_F
    =
    \mathcal{O}(h^2)
\]
uniformly on each interpolation interval.
\end{remark}

\section{Recursive GIDER Interpolation}\label{sec:gider}

We now construct higher-order Grassmann interpolants by replacing the linear interpolation step in Neville's algorithm by \(\GLERP\).  The resulting method is recursive, intrinsic, and depends only on geodesic interpolation on \(\Gr(r,m)\).  Since the recursion may evaluate geodesics outside the interval between their endpoints, we use the same formula \(\GLERP(\mathcal{A},\mathcal{B};\lambda)=\Exp_{\mathcal{A}}\bigl(\lambda\Log_{\mathcal{A}}(\mathcal{B})\bigr)\) also for real parameters \(\lambda\notin[0,1]\), whenever the logarithm and the corresponding geodesic extrapolation remain well-defined.

Suppose that \(n+1\) Grassmann data points \(\mathcal{Y}_0,\mathcal{Y}_1,\ldots,\mathcal{Y}_n\in\Gr(r,m)\) are sampled at equally spaced parameter values \(t_j=t_0+jh\), \(j=0,1,\ldots,n\).

\begin{definition}[Recursive \(\GIDER_n\)]\label{def:gider}
Set
\[
    \mathcal{I}_j^0(t)=\mathcal{Y}_j,
    \qquad j=0,1,\ldots,n.
\]
For \(k=1,2,\ldots,n\), define recursively
\[
    \mathcal{I}_j^k(t)
    =
    \GLERP\left(
        \mathcal{I}_j^{k-1}(t),
        \mathcal{I}_{j+1}^{k-1}(t);
        \lambda_j^k(t)
    \right),
    \qquad
    j=0,1,\ldots,n-k,
\]
where
\[
    \lambda_j^k(t)=\frac{t-t_j}{t_{j+k}-t_j}.
\]
The \(n\)-th order Grassmann interpolation of the data is defined by
\[
    \GIDER_n(\mathcal{Y}_0,\ldots,\mathcal{Y}_n;t)=\mathcal{I}_0^n(t),
    \qquad t\in[t_0,t_n].
\]
\end{definition}

This is the Grassmann analogue of Neville interpolation.  In Euclidean space, replacing \(\GLERP\) by ordinary linear interpolation recovers the classical Neville algorithm and hence the polynomial interpolant through the data.  On the Grassmann manifold, the same recursive structure is retained, but each affine combination is replaced by a geodesic interpolation or extrapolation.

\subsection{The Case of \(\GIDER_2\)}

For three data points \(\mathcal{Y}_0,\mathcal{Y}_1,\mathcal{Y}_2\) sampled at \(t_0,t_1,t_2\), with \(t_1=(t_0+t_2)/2\), define the normalized parameter \(\tau=(t-t_0)/(t_2-t_0)\).  Then \(t=t_0+2h\tau\), and the recursive definition gives
\[
\begin{split}
    \GIDER_2(\mathcal{Y}_0,\mathcal{Y}_1,\mathcal{Y}_2;\tau)
    =
    \GLERP\Big(&
        \GLERP(\mathcal{Y}_0,\mathcal{Y}_1;2\tau),\\
        &
        \GLERP(\mathcal{Y}_1,\mathcal{Y}_2;2\tau-1);
        \tau
    \Big).
\end{split}
\]
Thus \(\GIDER_2\) is a \(\GLERP\) of two \(\GLERP\) interpolants.  If the manifold is replaced by a Euclidean vector space, this formula reduces to the quadratic Lagrange interpolant through the nodes \(0\), \(1/2\), and \(1\), namely
\[
    (1-\tau)(1-2\tau)y_0
    +
    4\tau(1-\tau)y_1
    +
    \tau(2\tau-1)y_2.
\]
This interpretation is different from the control-point version of \(\SIDER_2\) in \cite{FongLeung2023}; the present construction is instead a direct manifold analogue of quadratic Lagrange interpolation.

\begin{remark}[Geodesic extrapolation and bounded parameters]
In the formula for \(\GIDER_2\), the inner \(\GLERP\) parameters are \(2\tau\) and \(2\tau-1\).  Hence, for \(\tau\in[0,1]\), one of the inner geodesic evaluations may be an extrapolation outside the interval \([0,1]\).  More generally, \(\GIDER_n\) contains intermediate \(\GLERP\) evaluations with parameters outside \([0,1]\).  For a fixed order \(n\), however, these parameters are bounded.  More precisely, define
\[
    \Lambda_n
    =
    \max_{1\le k\le n}\max_{0\le j\le n-k}
    \sup_{t\in[t_0,t_n]}
    \left|\lambda_j^k(t)\right|.
\]
For a fixed set of interpolation nodes, \(\Lambda_n<\infty\).  In particular, for equally spaced nodes \(t_j=t_0+jh\), one has \(\lambda_j^k(t_0+h\sigma)=(\sigma-j)/k\), \(0\le \sigma\le n\), and hence \(\Lambda_n\le n\).  The following proposition gives a quantitative sufficient condition ensuring that the geodesic extrapolations remain admissible.
\end{remark}

\begin{prop}[Quantitative admissibility condition]
\label{prop:quantitative_admissibility}
Fix the interpolation order \(n\) and a target interval \([t_0,t_n]\).  Let \(q\in\Gr(r,m)\) be a reference subspace, and suppose that the input data satisfy
\[
    \max_{0\le j\le n} d_{\Gr}(q,\mathcal{Y}_j) \le A h
\]
for some constant \(A\) independent of \(h\).  Let \(\Lambda_n\) be defined as above, and set
\[
    C_0=A,
    \qquad
    C_k=(1+2\Lambda_n)C_{k-1},
    \qquad k=1,\ldots,n.
\]
If
\[
    2C_{n-1}h < \frac{\pi}{2},
\]
then every pair of intermediate subspaces appearing as the two arguments of a \(\GLERP\) step is connected by a unique minimizing geodesic.  More precisely, every level-\(k\) intermediate subspace satisfies
\[
    d_{\Gr}\bigl(q,\mathcal{I}_j^k(t)\bigr)
    \le C_k h,
    \qquad
    k=0,1,\ldots,n,
\]
and every pair used in a level-\(k\) recursive step satisfies
\[
    \theta_{\max}\bigl(\mathcal{I}_j^{k-1}(t),
    \mathcal{I}_{j+1}^{k-1}(t)\bigr)
    \le
    d_{\Gr}\bigl(\mathcal{I}_j^{k-1}(t),
    \mathcal{I}_{j+1}^{k-1}(t)\bigr)
    \le 2C_{k-1}h
    \le 2C_{n-1}h,
\]
where \(\theta_{\max}\) denotes the largest principal angle.  Consequently, if \(\eta<\pi/2\) and \(2C_{n-1}h\le \eta\), then all principal angles required by the recursive construction are at least \(\pi/2-\eta\) away from the cut-locus threshold.
\end{prop}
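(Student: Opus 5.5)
The plan is an induction on the level \(k\), proving at each level the bound \(d_{\Gr}(q,\mathcal{I}_j^k(t))\le C_kh\) together with the admissibility of the pair that produced it. The base case \(k=0\) is the hypothesis \(d_{\Gr}(q,\mathcal{Y}_j)\le Ah=C_0h\).

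For the inductive step, assume every level-\((k-1)\) subspace satisfies \(d_{\Gr}(q,\mathcal{I}_j^{k-1}(t))\le C_{k-1}h\). Then the triangle inequality for the geodesic distance gives
\[
    d_{\Gr}\bigl(\mathcal{I}_j^{k-1}(t),\mathcal{I}_{j+1}^{k-1}(t)\bigr)\le 2C_{k-1}h\le 2C_{n-1}h<\frac{\pi}{2},
\]
where \(C_{k-1}\le C_{n-1}\) because \(1+2\Lambda_n\ge1\). Since \(d_{\Gr}=\norm{\Theta}_F\) dominates every individual principal angle, \(\theta_{\max}\le d_{\Gr}<\pi/2\). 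Hence \(M=Y_0^TY_1\) is nonsingular, the logarithm of Section~\ref{sec:background} is defined, and its Frobenius norm equals the distance. This gives the chain of inequalities in the statement and shows that the pair is joined by a unique minimizing geodesic, since all principal angles are below \(\pi/2\).

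Next I bound the new point. Write \(\mathcal{A}=\mathcal{I}_j^{k-1}(t)\), \(\mathcal{B}=\mathcal{I}_{j+1}^{k-1}(t)\) and \(\lambda=\lambda_j^k(t)\), so that \(|\lambda|\le\Lambda_n\). The curve \(s\mapsto\Exp_{\mathcal{A}}(s\Log_{\mathcal{A}}\mathcal{B})\) is a geodesic with speed \(\norm{\Log_{\mathcal{A}}\mathcal{B}}_F=d_{\Gr}(\mathcal{A},\mathcal{B})\). Its length from \(s=0\) to \(s=\lambda\) is \(|\lambda|\,d_{\Gr}(\mathcal{A},\mathcal{B})\), and the distance is bounded by the length of any connecting curve, even when the extrapolated segment is not minimizing. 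Therefore
\[
    d_{\Gr}(q,\mathcal{I}_j^k)\le d_{\Gr}(q,\mathcal{A})+|\lambda|\,d_{\Gr}(\mathcal{A},\mathcal{B})\le C_{k-1}h+2\Lambda_nC_{k-1}h=C_kh,
\]
which closes the induction. The final consequence follows directly, since \(\theta_{\max}\le 2C_{n-1}h\le\eta\).

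The main subtlety is the extrapolation step. One should not invoke the constant-speed identity of \(\GLERP\), which the paper states only while the geodesic stays minimizing. The argument must rely only on the fact that distance is at most the length of the geodesic arc of parameter length \(|\lambda|\). Two further points need care. First, the recursion only needs admissibility at levels \(k\le n\), and those pairs live at levels up to \(n-1\), so the condition on \(C_{n-1}\) suffices; the bound on \(C_n\) is only a bound on the output and needs no cut-locus condition. Second, the Grassmann exponential is defined on the whole tangent space (the Grassmannian is complete), so extrapolated evaluations are always defined. Well-definedness only concerns the logarithm of each pair, and that is what the induction controls.
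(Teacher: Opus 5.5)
Your proposal is correct and follows the paper's proof essentially step for step. Both argue by induction on the recursion level and use the same three ingredients: the triangle inequality through \(q\) gives \(d_{\Gr}(\mathcal{I}_j^{k-1},\mathcal{I}_{j+1}^{k-1})\le 2C_{k-1}h\); the bound \(\theta_{\max}\le d_{\Gr}\) guarantees a well-defined logarithm; and the length \(|\lambda_j^k(t)|\,d_{\Gr}(\mathcal{A},\mathcal{B})\) of the (possibly extrapolated) geodesic arc closes the induction with \(C_k=(1+2\Lambda_n)C_{k-1}\).

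Your extra remarks are also correct and only make explicit what the paper leaves implicit: the monotonicity \(C_{k-1}\le C_{n-1}\), the fact that distance is bounded by arc length even for non-minimizing arcs, and the completeness of \(\Gr(r,m)\), which means \(\Exp\) is always defined.
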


\begin{proof}
The proof is by induction over the recursion level.  At level \(k=0\), the desired bound is precisely the assumed estimate on the input data.  Suppose that all level-\((k-1)\) intermediate subspaces satisfy
\[
    d_{\Gr}\bigl(q,\mathcal{I}_j^{k-1}(t)\bigr)
    \le C_{k-1}h.
\]
Then, by the triangle inequality,
\[
    d_{\Gr}\bigl(\mathcal{I}_j^{k-1}(t),
    \mathcal{I}_{j+1}^{k-1}(t)\bigr)
    \le 2C_{k-1}h.
\]
If \(2C_{n-1}h<\pi/2\), then this distance is smaller than \(\pi/2\) for every \(k\le n\).  Since the largest principal angle is bounded above by the Grassmann distance, the logarithm map used in this \(\GLERP\) step is uniquely defined.

Let
\[
    Z=\GLERP\left(
        \mathcal{I}_j^{k-1}(t),
        \mathcal{I}_{j+1}^{k-1}(t);
        \lambda_j^k(t)
    \right).
\]
The curve defining this \(\GLERP\) step has length at most
\[
    \left|\lambda_j^k(t)\right|
    d_{\Gr}\bigl(\mathcal{I}_j^{k-1}(t),
    \mathcal{I}_{j+1}^{k-1}(t)\bigr).
\]
Therefore,
\[
\begin{split}
    d_{\Gr}(q,Z)
    &\le
    d_{\Gr}\bigl(q,\mathcal{I}_j^{k-1}(t)\bigr)
    +
    \left|\lambda_j^k(t)\right|
    d_{\Gr}\bigl(\mathcal{I}_j^{k-1}(t),
    \mathcal{I}_{j+1}^{k-1}(t)\bigr)  \\
    &\le C_{k-1}h+2\Lambda_n C_{k-1}h
    = C_k h.
\end{split}
\]
This proves the induction step and hence the proposition.
\end{proof}

The hypothesis in Proposition~\ref{prop:quantitative_admissibility} is satisfied automatically for sufficiently dense samples from a smooth curve.  For example, if \(\mathcal{Y}_j=\gamma(t_j)\), the nodes are equally spaced, and \(V=\sup_{t\in[t_0,t_n]}\|\dot\gamma(t)\|\), then choosing \(q=\gamma(t_0)\) gives
\[
    d_{\Gr}(q,\mathcal{Y}_j)
    \le V(t_j-t_0)
    \le nVh.
\]
Thus one may take \(A=nV\).  For fixed \(n\), the constants \(\Lambda_n\) and \(C_k\) are independent of \(h\), and the condition \(2C_{n-1}h<\pi/2\) holds for all sufficiently small \(h\).  This makes explicit that the relevant principal angles do not approach \(\pi/2\) under refinement; instead, they remain \(\mathcal{O}(h)\) and therefore stay uniformly inside the injectivity neighborhood.

\begin{algorithm}
\caption{Recursive \(\GIDER_n\)}
\KwIn{Grassmann data \(\mathcal{Y}_0,\ldots,\mathcal{Y}_n\) at \(t_0,\ldots,t_n\), target time \(t\)}
\KwOut{\(\widehat{\mathcal{Y}}=\GIDER_n(\mathcal{Y}_0,\ldots,\mathcal{Y}_n;t)\)}
Set \(\mathcal{I}_j^0=\mathcal{Y}_j\), \(j=0,\ldots,n\)\;
\For{\(k=1,\ldots,n\)}{
    \For{\(j=0,\ldots,n-k\)}{
        Set \(\lambda=(t-t_j)/(t_{j+k}-t_j)\)\;
        Set \(\mathcal{I}_j^k=\GLERP(\mathcal{I}_j^{k-1},\mathcal{I}_{j+1}^{k-1};\lambda)\)\;
    }
}
Return \(\widehat{\mathcal{Y}}=\mathcal{I}_0^n\)\;
\end{algorithm}

\subsection{Interpolation Property}

\begin{prop}[Interpolation property]\label{prop:interpolation_property}
Assume that all \(\GLERP\) evaluations in the recursive construction are well-defined.  Then the \(\GIDER_n\) interpolant satisfies
\[
    \GIDER_n(\mathcal{Y}_0,\ldots,\mathcal{Y}_n;t_j)=\mathcal{Y}_j,
    \qquad j=0,1,\ldots,n.
\]
\end{prop}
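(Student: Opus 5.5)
We prove the stronger statement that, for every level \(k\) and every admissible \(j\), the level-\(k\) intermediate curve interpolates its own \(k+1\) data points:
\[
    \mathcal{I}_j^k(t_i)=\mathcal{Y}_i,
    \qquad i=j,j+1,\ldots,j+k.
\]
The proof is by induction on \(k\). Taking \(k=n\), \(j=0\) then gives the proposition. The base case \(k=0\) is immediate from \(\mathcal{I}_j^0(t)=\mathcal{Y}_j\).

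The induction step uses two endpoint facts about \(\GLERP\), assuming \(\mathcal{B}\) lies in the injectivity neighborhood of \(\mathcal{A}\):
\[
    \GLERP(\mathcal{A},\mathcal{B};0)=\Exp_{\mathcal{A}}(0)=\mathcal{A},
    \qquad
    \GLERP(\mathcal{A},\mathcal{B};1)=\Exp_{\mathcal{A}}\bigl(\Log_{\mathcal{A}}(\mathcal{B})\bigr)=\mathcal{B}.
\]
Both were recorded after Definition~\ref{def:glerp}.

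Assume the claim holds at level \(k-1\), and fix \(i\) with \(j\le i\le j+k\). There are three cases.

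\emph{Case \(i=j\).} Here \(\lambda_j^k(t_j)=0\), so
\[
    \mathcal{I}_j^k(t_j)=\mathcal{I}_j^{k-1}(t_j)=\mathcal{Y}_j,
\]
where the last equality is the hypothesis for \(\mathcal{I}_j^{k-1}\), whose node set is \(\{t_j,\ldots,t_{j+k-1}\}\).

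\emph{Case \(i=j+k\).} Here \(\lambda_j^k(t_{j+k})=1\), so
\[
    \mathcal{I}_j^k(t_{j+k})=\mathcal{I}_{j+1}^{k-1}(t_{j+k})=\mathcal{Y}_{j+k},
\]
by the hypothesis for \(\mathcal{I}_{j+1}^{k-1}\), whose node set is \(\{t_{j+1},\ldots,t_{j+k}\}\).

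\emph{Case \(j<i<j+k\).} Both inner curves interpolate at \(t_i\), so
\[
    \mathcal{I}_j^{k-1}(t_i)=\mathcal{I}_{j+1}^{k-1}(t_i)=\mathcal{Y}_i.
\]
It therefore suffices to show that \(\GLERP(\mathcal{A},\mathcal{A};\lambda)=\mathcal{A}\) for every real \(\lambda\). This follows because \(\Log_{\mathcal{A}}(\mathcal{A})=0\): in the paper's formula, \(M=Y_0^TY_0=I\) gives \(A=Y_0-Y_0=0\), so \(\Sigma=0\) and the logarithm vanishes. Then \(\Exp_{\mathcal{A}}(\lambda\cdot 0)=\mathcal{A}\), and the value of \(\lambda_j^k(t_i)\) is irrelevant. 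This holds even though \(\lambda_j^k(t_i)\in(0,1)\) here, which matches the Euclidean Neville argument.

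The only step needing care is the endpoint identity at \(\lambda=1\). It requires \(\Exp\circ\Log\) to be the identity, which uses the hypothesis that all \(\GLERP\) evaluations are well-defined, i.e.\ the pair lies in the injectivity neighborhood. This is guaranteed, for instance, by Proposition~\ref{prop:quantitative_admissibility}. One should also note basis invariance: \(\Log\) and \(\Exp\) act on subspaces, so the equalities hold as Grassmann points regardless of the representatives \(Y\) chosen.
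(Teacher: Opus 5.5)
Your proof is correct and follows the paper's argument: induction on the level $k$ for the local statement $\mathcal{I}_j^k(t_\ell)=\mathcal{Y}_\ell$, $\ell=j,\ldots,j+k$, using the cases $\lambda=0$, $\lambda=1$, and coincident arguments. Your extra checks that $\Log_{\mathcal{A}}(\mathcal{A})=0$ and that $\Exp\circ\Log$ is the identity under the well-definedness hypothesis spell out what the paper uses implicitly.
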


\begin{proof}
We prove a slightly more general statement.  For each \(k=0,1,\ldots,n\), the interpolant \(\mathcal{I}_j^k(t)\) interpolates the local data \(\mathcal{Y}_j,\mathcal{Y}_{j+1},\ldots,\mathcal{Y}_{j+k}\), namely \(\mathcal{I}_j^k(t_\ell)=\mathcal{Y}_\ell\) for \(\ell=j,j+1,\ldots,j+k\).  The claim is immediate for \(k=0\).  Assume that it holds for order \(k-1\).  At the left endpoint \(t=t_j\), we have \(\lambda_j^k(t_j)=0\), and therefore \(\mathcal{I}_j^k(t_j)=\mathcal{I}_j^{k-1}(t_j)=\mathcal{Y}_j\).  At the right endpoint \(t=t_{j+k}\), we have \(\lambda_j^k(t_{j+k})=1\), and therefore \(\mathcal{I}_j^k(t_{j+k})=\mathcal{I}_{j+1}^{k-1}(t_{j+k})=\mathcal{Y}_{j+k}\).  For an interior node \(t_\ell\), \(j<\ell<j+k\), the induction hypothesis gives \(\mathcal{I}_j^{k-1}(t_\ell)=\mathcal{Y}_\ell\) and \(\mathcal{I}_{j+1}^{k-1}(t_\ell)=\mathcal{Y}_\ell\).  Hence the recursive step gives
\[
    \mathcal{I}_j^k(t_\ell)
    =
    \GLERP(\mathcal{Y}_\ell,\mathcal{Y}_\ell;\lambda_j^k(t_\ell))
    =
    \mathcal{Y}_\ell.
\]
This proves the local interpolation property for all \(k\).  Taking \(j=0\) and \(k=n\) gives the stated result.
\end{proof}

\begin{remark}[Injectivity radius and admissible data]\label{rem:injectivity_gider}
The preceding formulas are local in nature.  The logarithm map \(\Log_{\mathcal{Y}_0}(\mathcal{Y}_1)\) is uniquely defined only when \(\mathcal{Y}_1\) lies inside the injectivity neighborhood of \(\mathcal{Y}_0\).  For the Grassmann manifold with the canonical metric, a sufficient condition is that the largest principal angle between the two subspaces be strictly smaller than \(\pi/2\).  Proposition~\ref{prop:quantitative_admissibility} gives a concrete small-data-spacing condition for \(\GIDER_n\): for fixed order \(n\), if the data are \(\mathcal{O}(h)\)-close to a reference subspace and \(h\) is sufficiently small relative to the bounded extrapolation constant \(\Lambda_n\), then all intermediate \(\GLERP\) steps remain uniformly away from the cut locus.  This is the admissibility condition assumed in the local analysis below.
\end{remark}

\section{Accuracy of \texorpdfstring{\(\GIDER_n\)}{GIDERn}}\label{sec:accuracy}

We next establish a local accuracy result for the recursive \(\GIDER_n\) construction.  The argument follows the classical interpolation-error proof, but it must be formulated in a local coordinate chart because the interpolant is Grassmann-valued.  The main idea is simple: in a sufficiently small normal neighborhood, the recursive geodesic construction is a smooth perturbation of Neville interpolation, the interpolant agrees with the exact curve at the \(n+1\) interpolation nodes, and the coordinate error therefore contains the product of the nodal factors.

\begin{assump}[Local normal-neighborhood assumption]\label{assump:normal}
Let \(\gamma:[a,b]\to\Gr(r,m)\) be a smooth curve.  For the interpolation interval \([t_0,t_n]\), assume that \(h\) is sufficiently small so that the exact curve, the data points, and all intermediate \(\GLERP\) evaluations in the \(\GIDER_n\) recursion remain in a compact subset \(K\) of a common normal coordinate neighborhood \(U\subset\Gr(r,m)\).  In particular, all logarithm maps used by \(\GLERP\) are uniquely defined, and all geodesic interpolation and extrapolation steps in the recursion are well-defined.
\end{assump}

For the Grassmann manifold with the canonical metric, a practical sufficient condition is that all relevant principal angles remain bounded away from \(\pi/2\).  Proposition~\ref{prop:quantitative_admissibility} provides one explicit way to verify this condition: for fixed interpolation order \(n\), the recursive extrapolation parameters are bounded, and sufficiently dense samples from a smooth curve keep all intermediate principal angles of size \(\mathcal{O}(h)\).

We first record two elementary facts that will be used in the proof.  The first states that the recursive \(\GIDER_n\) curve has uniformly bounded derivatives in a local coordinate chart.  The second is the standard interpolation remainder estimate written in a form suitable for the coordinate error.

\begin{lemma}[Local coordinate stability]\label{lemma:local_stability}
Let Assumption \ref{assump:normal} hold, and let \(\phi:U\to\bR^d\), \(d=r(m-r)\), be a smooth coordinate chart on the normal neighborhood \(U\), with \(K\Subset U\).  Define
\[
    F_h(t)
    =
    \phi\left(\GIDER_n(\gamma(t_0),\ldots,\gamma(t_n);t)\right),
    \qquad t\in[t_0,t_n].
\]
If \(\gamma\in C^{n+1}\), then \(F_h\in C^{n+1}([t_0,t_n];\bR^d)\), and there exists a constant \(C\), independent of sufficiently small \(h\), such that
\[
    \max_{t\in[t_0,t_n]}
    \left\|
        \frac{d^{\,n+1}}{dt^{\,n+1}}F_h(t)
    \right\|
    \le C.
\]
\end{lemma}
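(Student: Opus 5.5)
The plan is to rescale time and reduce the claim to a statement about cancellations in \(h\). A direct chain-rule estimate will not work: each parameter \(\lambda_j^k(t)=(t-t_j)/(kh)\) has \(t\)-derivative \(1/(kh)\), so differentiating the recursion \(n+1\) times naively gives bounds of size \(h^{-(n+1)}\). The proof has to show that these large factors are compensated by the \(\mathcal{O}(h)\) closeness of the subspaces being interpolated.

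\emph{Smoothness and reduction.} Write \(t=t_0+h\sigma\) with \(\sigma\in[0,n]\). Work in normal coordinates centred at \(q=\gamma(t_0)\); a change to any other chart \(\phi\) with \(K\Subset U\) only introduces a fixed smooth diffeomorphism, handled by Fa\`a di Bruno. Set \(\xi(s)=\Log_q\gamma(t_0+s)\), which is \(C^{n+1}\). In these coordinates the map \(G(a,b,\lambda)=\GLERP(a,b;\lambda)\) is \(C^\infty\), in fact real-analytic, on \(K\times K\times[-\Lambda_n,\Lambda_n]\). This follows from the explicit \(\Exp\)/\(\Log\) formulas, since Assumption~\ref{assump:normal} keeps all principal angles below \(\pi/2\). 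Composing the recursion then gives
\[
F_h(t_0+h\sigma)=\Psi\bigl(\xi(0),\xi(h),\ldots,\xi(nh);\sigma\bigr),
\]
where \(\Psi\) is \(C^\infty\) in all arguments and the \(\lambda\)'s are affine in \(\sigma\). This immediately gives \(F_h\in C^{n+1}\). Define \(H(h,\sigma)\) to be the right-hand side. Then \(H\) is \(C^{n+1}\) in \(h\), \(C^\infty\) in \(\sigma\), and the mixed partials \(\partial_\sigma^a\partial_h^b H\) exist and are continuous for \(b\le n+1\), because \(h\) enters only through the data. Since \(d^{n+1}F_h/dt^{n+1}=h^{-(n+1)}\partial_\sigma^{n+1}H\), it suffices to prove
\[
\sup_{\sigma\in[0,n]}\bigl|\partial_\sigma^{n+1}H(h,\sigma)\bigr|\le Ch^{n+1}.
\]

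\emph{Degree bookkeeping (main step).} Expand \(G\) about the diagonal. Writing \(b=a+w\), we have \(\Log_a b=w+\mathcal{O}(|w|^2)\) with coefficients smooth in \(a\), and \(\Exp_a(\lambda v)\) expands in powers of \(\lambda v\). It follows that the Taylor coefficient of \(G(a,a+w,\lambda)-a\) that is homogeneous of degree \(p\) in \(w\) is a polynomial in \(\lambda\) of degree at most \(p\). Now induct on the level \(k\). The claim is that the intermediate point \(\mathcal{I}_j^k\), expanded in \(h\), has the form \(\sum_{p\le n}h^p c^k_{j,p}(\sigma)+R^k_j(h,\sigma)\), where each \(c^k_{j,p}\) is a polynomial in \(\sigma\) of degree at most \(p\). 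The base case holds because \(\xi(jh)\) has Taylor coefficients independent of \(\sigma\). For the inductive step, the difference \(\mathcal{I}_{j+1}^{k-1}-\mathcal{I}_j^{k-1}\) has no \(h^0\) term, since all points coincide at \(h=0\). So a degree-\(p\) monomial in this difference contributes only at order \(h^{\ge p}\), and it is multiplied by a polynomial of degree at most \(p\) in \(\lambda\), which is affine in \(\sigma\). Each factor \(h^{p_i}c_{p_i}\) has degree at most \(p_i\), so their products preserve the rule "degree in \(\sigma\) at most the power of \(h\)". Consequently \(\partial_\sigma^{n+1}\) annihilates every term with \(p\le n\).

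\emph{Remainder.} Write the remainder in integral form,
\[
R(h,\sigma)=\frac{h^{n+1}}{n!}\int_0^1(1-s)^n\,\partial_h^{n+1}H(sh,\sigma)\,ds .
\]
Differentiating \(n+1\) times in \(\sigma\) under the integral gives \(\mathcal{O}(h^{n+1})\), uniformly on \([0,n]\). This uses only continuity of \(\partial_\sigma^{n+1}\partial_h^{n+1}H\) on the compact set \([0,h_0]\times[0,n]\), which holds because \(\Psi\) is smooth and \(\xi\in C^{n+1}\).

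I expect the main obstacle to be making the degree bookkeeping rigorous. The difficulty is that \(\xi\) is only \(C^{n+1}\), so the expansions have to be truncated at order \(n\) with controlled remainders rather than treated as formal power series. My first attempt will be to carry out the induction on the truncated Taylor polynomials of \(\Psi\) jointly in \((h,\sigma)\). The Euclidean Neville case, where the \(h^p\) coefficient is exactly a polynomial of degree at most \(p\), serves as a sanity check.
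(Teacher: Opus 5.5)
\paragraph{Verdict.} The proposal has a genuine gap, located in the inductive step of the degree bookkeeping. It is an algebraic problem, not a regularity issue.

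\paragraph{What is correct.} Your reduction is sound, and on the regularity side it is more careful than the paper's own argument, which takes the same route: rescale \(t=t_0+h\sigma\), expand in \(h\), and argue that the low-order coefficients behave like Neville interpolation. The following steps are all correct:
\begin{itemize}
\item writing \(H(h,\sigma)=\Psi(\xi(0),\ldots,\xi(nh);\sigma)\);
\item Taylor-expanding in \(h\) to order \(n\) with integral remainder;
\item observing that the coefficient of \(w^{\otimes s}\) in \(G(a,a+w,\lambda)-a\) is a polynomial of degree at most \(s\) in \(\lambda\).
\end{itemize}

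\paragraph{Where the induction fails.} Consider a term with \(s\) factors of \(w=\mathcal{I}^{k-1}_{j+1}-\mathcal{I}^{k-1}_{j}\) of \(h\)-orders \(p_1,\ldots,p_s\), so \(P=\sum_i p_i\ge s\). It is multiplied by a polynomial of degree at most \(s\) in \(\lambda=(\sigma-j)/k\). Under your hypothesis each factor has \(\sigma\)-degree at most \(p_i\), so the product has \(\sigma\)-degree at most \(P+s\), not \(P\). The absence of an \(h^0\) term in \(w\) pays only in \(h\)-order, and your count spends that budget twice.

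The step fails already in flat space. For \(n=2\), the term \(\lambda(\mathcal{I}^1_1-\mathcal{I}^1_0)\) obeys your rule only because the \(h^p\)-coefficients of \(\mathcal{I}^1_0\) and \(\mathcal{I}^1_1\) have the same top-degree part. For \(p=1\) both equal \(\sigma\xi'(0)\). Hence their difference has degree at most \(p-1\). Nothing in the hypothesis \(\deg_\sigma c^{k-1}_{j,p}\le p\) gives this. Without it, the loss of up to \(s\) degrees compounds from level to level. You then do not even obtain the weaker bound \(\deg_\sigma\le n\) for \(p\le n\) that you actually need.

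\paragraph{The missing idea.} You need to track how the coefficients depend on the position of the stencil.
\begin{itemize}
\item For real \(x\), run the same recursion with data \(\xi(xh),\xi((x+1)h),\ldots\) and parameters \((\sigma-x)/k\).
\item Prove that the \(h^p\)-coefficient of \(\mathcal{I}^k_x\) is a polynomial in \((\sigma,x)\) of \emph{total} degree at most \(p\).
\item The base case is \(x^p\xi^{(p)}(0)/p!\).
\item For the step, expand the coefficients of \(G(a,a+w,\lambda)-a\) about \(a=\xi(0)\) and count degrees factor by factor:
  \begin{itemize}
  \item each factor \(a-\xi(0)\) of \(h\)-order \(p_i\) contributes total degree at most \(p_i\);
  \item each factor of \(w\) is a forward difference in \(x\) of such a polynomial, so it contributes at most \(p_i-1\);
  \item the \(\lambda\)-polynomial contributes at most \(s\).
  \end{itemize}
\item The total is at most \(\sum_i p_i\), so the invariant propagates.
\end{itemize}
Setting \(x=0\) gives \(\deg_\sigma\le p\). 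Your remainder estimate and change of chart then complete the proof.

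The paper's own proof takes this same cancellation for granted. It asserts that the low-order coefficients agree with ordinary Neville interpolation without establishing the degree structure that makes this true.
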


\begin{proof}
The map \(\GLERP\) is a composition of the Grassmann logarithm map, scalar multiplication in a tangent space, and the Grassmann exponential map.  These maps are smooth on \(K\) because \(K\) lies inside a normal neighborhood and stays away from the cut locus.  Therefore every finite recursive composition appearing in \(\GIDER_n\) is smooth in \(t\).

It remains to justify that the derivatives do not blow up as \(h\to0\).  Write \(t=t_0+h\sigma\), where \(\sigma\in[0,n]\), and define the rescaled coordinate curve
\[
    \widetilde F_h(\sigma)
    =
    F_h(t_0+h\sigma).
\]
We shall show that
\[
    \frac{d^{\,q}}{d\sigma^{q}}\widetilde F_h(\sigma)
    =
    \mathcal{O}(h^q),
    \qquad q=1,\ldots,n+1,
\]
uniformly for \(\sigma\in[0,n]\).  This implies the desired estimate, because
\[
    \frac{d^{\,q}}{dt^q}F_h(t)
    =
    h^{-q}
    \frac{d^{\,q}}{d\sigma^q}\widetilde F_h(\sigma).
\]

The claim follows from the local expansion of geodesic interpolation.  Fix a point \(p\in K\), and use normal coordinates \(x=\phi(\mathcal{X})\) centered in a slightly larger coordinate neighborhood.  In these coordinates, the map
\[
    G(x,y;\lambda)
    =
    \phi\left(
        \GLERP(\phi^{-1}(x),\phi^{-1}(y);\lambda)
    \right)
\]
is smooth for \(x,y\in\phi(K)\) and for all values of \(\lambda\) generated by the fixed-order recursion.  Since \(G(x,x;\lambda)=x\) and the geodesic equation has smooth coefficients in normal coordinates, the small-displacement expansion has the form
\[
    G(x,y;\lambda)
    =
    (1-\lambda)x+\lambda y
    +
    \mathcal{R}(x,y;\lambda),
\]
where, for \(x\) and \(y\) sufficiently close,
\[
    \left\|\partial_\lambda^q \mathcal{R}(x,y;\lambda)\right\|
    \le
    C_q\|x-y\|^2,
    \qquad q=0,1,\ldots,n+1.
\]
The constants \(C_q\) are uniform because \(K\) is compact and the admissible \(\lambda\)-values are bounded for fixed \(n\).

The sampled data satisfy
\[
    \phi(\gamma(t_0+jh))
    =
    \phi(\gamma(t_0))
    +
    \sum_{\ell=1}^{n+1}
    \frac{(jh)^\ell}{\ell!}
    \frac{d^\ell}{dt^\ell}(\phi\circ\gamma)(t_0)
    +
    \mathcal{O}(h^{n+2}),
\]
uniformly for \(j=0,\ldots,n\).  In particular, neighboring sampled data differ by \(\mathcal{O}(h)\).  Applying the preceding expansion of \(G\) recursively, one obtains by induction over the recursion level \(k\) that each rescaled intermediate curve
\[
    \widetilde F_{j,k,h}(\sigma)
    =
    \phi\left(\mathcal{I}_j^k(t_0+h\sigma)\right)
\]
admits an expansion of the form
\[
    \widetilde F_{j,k,h}(\sigma)
    =
    \sum_{\ell=0}^{n+1}
    h^\ell P_{j,k,\ell}(\sigma)
    +
    \mathcal{O}(h^{n+2}),
\]
where the coefficients \(P_{j,k,\ell}\) are smooth functions on the fixed interval \([0,n]\), and, for \(\ell\le k\), they agree with the coefficients generated by the ordinary Neville interpolation of the Taylor coefficients of \(\phi\circ\gamma\).  In particular, the \(\sigma\)-derivatives of order \(q\) satisfy
\[
    \frac{d^q}{d\sigma^q}\widetilde F_{j,k,h}(\sigma)=\mathcal{O}(h^q),
    \qquad q=1,\ldots,n+1,
\]
uniformly for all \(j,k\) appearing in the recursion.  The induction uses that each normalized parameter \(\lambda_j^k(t_0+h\sigma)=(\sigma-j)/k\) is independent of \(h\), while the displacement between the two lower-level inputs to each \(\GLERP\) step is \(\mathcal{O}(h)\).  The remainder term \(\mathcal{R}\) is therefore at least quadratic in a quantity of size \(\mathcal{O}(h)\), and its derivatives with respect to \(\sigma\) satisfy the same scaling after the chain rule is applied.  Taking \(j=0\) and \(k=n\) gives the asserted bounds for \(\widetilde F_h\), and hence for \(F_h\).
\end{proof}

\begin{lemma}[Root estimate]\label{lemma:root_estimate}
Let \(E:[t_0,t_n]\to\bR^d\) be a \(C^{n+1}\) function such that \(E(t_j)=0\) for \(j=0,1,\ldots,n\), where \(t_j=t_0+jh\).  If
\[
    \max_{t\in[t_0,t_n]}\left\|E^{(n+1)}(t)\right\|\le M,
\]
then there exists a constant \(C_n\), depending only on \(n\), such that
\[
    \|E(t)\|
    \le
    C_n M h^{n+1},
    \qquad t\in[t_0,t_n].
\]
\end{lemma}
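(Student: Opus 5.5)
The plan is to reduce to the scalar case and then use the classical Lagrange interpolation remainder. Vector-valued functions do not admit a mean-value form with a single intermediate point, so we first fix an arbitrary unit vector \(w\in\bR^d\) and set \(e(t)=\inner{w}{E(t)}\). This \(e\) is a real \(C^{n+1}\) function with \(e(t_j)=0\) for \(j=0,\ldots,n\) and \(|e^{(n+1)}|\le M\). Once we show \(|e(t)|\le C_nMh^{n+1}\) uniformly in \(w\), taking the supremum over unit \(w\) gives the bound for \(\|E(t)\|\).

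For the scalar estimate, fix \(t\in[t_0,t_n]\) distinct from the nodes; at the nodes there is nothing to prove. Let \(\omega(s)=\prod_{j=0}^n(s-t_j)\) and consider the auxiliary function
\[
    g(s)=e(s)-\frac{e(t)}{\omega(t)}\,\omega(s).
\]
This function vanishes at the \(n+2\) distinct points \(t_0,\ldots,t_n,t\). Applying Rolle's theorem \(n+1\) times gives a point \(\xi\in(t_0,t_n)\) with \(g^{(n+1)}(\xi)=0\). Since \(\omega^{(n+1)}=(n+1)!\), this yields the standard remainder
\[
    e(t)=\frac{e^{(n+1)}(\xi)}{(n+1)!}\,\omega(t),
\]
and hence \(|e(t)|\le M|\omega(t)|/(n+1)!\). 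This is the same argument as in the classical interpolation error theorem, applied to an interpolant that is identically zero.

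It remains to bound the nodal polynomial. Writing \(t=t_0+h\sigma\) with \(\sigma\in[0,n]\), we have \(|\omega(t)|=h^{n+1}\prod_{j=0}^n|\sigma-j|\le h^{n+1}\max_{\sigma\in[0,n]}\prod_j|\sigma-j|\). A crude bound such as \(n^{n+1}\), or the sharper \(n!/4\), suffices. This gives the result with \(C_n=\max_{[0,n]}\prod_j|\sigma-j|/(n+1)!\), which depends only on \(n\).

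The only point needing care is the reduction from \(\bR^d\) to \(\bR\), because the Rolle argument fails componentwise when different intermediate points \(\xi\) are needed. Testing against a unit vector \(w\) avoids this. An alternative that also avoids Rolle is the Peano-kernel or divided-difference integral representation of the remainder, but the projection trick is the simplest route.
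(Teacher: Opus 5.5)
Your proposal is correct and follows essentially the paper's route: reduce to a scalar function, apply the classical interpolation remainder, and bound the nodal polynomial by a multiple of $h^{n+1}$. Two small differences: your Rolle argument is exactly the proof of the divided-difference mean-value theorem that the paper cites. Also, your reduction via $e=\inner{w}{E}$ with $\norm{w}=1$ is not forced, since the componentwise route does not fail. However, the paper's summation over components picks up a factor such as $\sqrt d$ (harmless, since $d=r(m-r)$ is fixed), so your projection is what actually yields a constant depending only on $n$, as the statement claims.
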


\begin{proof}
It is enough to prove the estimate componentwise.  Let \(e\) be one scalar component of \(E\).  For any fixed \(t\in[t_0,t_n]\), consider the \(n+1\) interpolation nodes \(t_0,\ldots,t_n\) together with the point \(t\).  Since \(e(t_j)=0\), the divided-difference formula gives
\[
    e(t)
    =
    e[t_0,\ldots,t_n,t]
    \prod_{j=0}^n (t-t_j),
\]
where \(e[t_0,\ldots,t_n,t]\) denotes the divided difference of order \(n+1\), with repeated nodes interpreted in the usual limiting sense if \(t\) coincides with one of the nodes.  By the mean-value theorem for divided differences, there exists \(\xi\in[t_0,t_n]\) such that
\[
    e[t_0,\ldots,t_n,t]
    =
    \frac{e^{(n+1)}(\xi)}{(n+1)!}.
\]
Hence
\[
    |e(t)|
    \le
    \frac{M}{(n+1)!}
    \prod_{j=0}^n |t-t_j|
    \le
    C_n M h^{n+1},
\]
because \(t\in[t_0,t_n]\) and all nodal distances are bounded by \(nh\).  Summing over the finitely many coordinate components gives the vector estimate.
\end{proof}

\begin{thm}[Accuracy of \(\GIDER_n\)]\label{thm:gider_accuracy}
Let \(\gamma:[a,b]\to\Gr(r,m)\) be a \(C^{n+1}\) curve, and suppose that
\[
    \mathcal{Y}_j=\gamma(t_j),
    \qquad
    t_j=t_0+jh,
    \qquad j=0,1,\ldots,n.
\]
Let
\[
    \widehat{\gamma}_h(t)
    =
    \GIDER_n(\mathcal{Y}_0,\ldots,\mathcal{Y}_n;t),
    \qquad t\in[t_0,t_n].
\]
Under Assumption \ref{assump:normal}, there exists a constant \(C\), independent of sufficiently small \(h\), such that
\[
    d_{\Gr}\bigl(\gamma(t),\widehat{\gamma}_h(t)\bigr)
    \le
    C h^{n+1},
    \qquad t\in[t_0,t_n].
\]
Therefore \(\GIDER_n\) is locally \((n+1)\)-st order accurate.
\end{thm}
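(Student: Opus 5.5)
The plan is to work in a single coordinate chart and combine the two preceding lemmas. Fix a smooth chart \(\phi:U\to\bR^d\) on the common normal neighborhood of Assumption~\ref{assump:normal}; for definiteness take normal coordinates \(\phi=\Log_{\mathcal{P}}\) centered at \(\mathcal{P}=\gamma(t_0)\), identified with \(\bR^d\), \(d=r(m-r)\), through an orthonormal basis of \(T_{\mathcal{P}}\Gr(r,m)\). Define the coordinate error
\[
    E(t)=\phi(\gamma(t))-F_h(t),
    \qquad
    F_h(t)=\phi\bigl(\widehat{\gamma}_h(t)\bigr),
    \qquad t\in[t_0,t_n].
\]
By Proposition~\ref{prop:interpolation_property}, \(\widehat{\gamma}_h(t_j)=\gamma(t_j)\), so \(E(t_j)=0\) for \(j=0,\ldots,n\).

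The next step is to control \(E^{(n+1)}\) uniformly in \(h\). For the exact part, \(\phi\circ\gamma\) is \(C^{n+1}\) because \(\gamma\in C^{n+1}\) and \(\phi\) is smooth. Since \(\gamma([t_0,t_n])\) lies in the compact set \(K\), the derivatives of \(\phi\circ\gamma\) up to order \(n+1\) are bounded by a constant depending only on \(\gamma\) and \(K\), not on \(h\). For the interpolant, Lemma~\ref{lemma:local_stability} gives \(F_h\in C^{n+1}\) with \(\max\|F_h^{(n+1)}\|\le C\), again independently of small \(h\). Hence \(\max_{[t_0,t_n]}\|E^{(n+1)}\|\le M\) with \(M\) independent of \(h\). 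Lemma~\ref{lemma:root_estimate} then yields \(\|E(t)\|\le C_nMh^{n+1}\) uniformly on \([t_0,t_n]\).

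Finally, the coordinate bound has to be converted into an intrinsic one. Both \(\gamma(t)\) and \(\widehat{\gamma}_h(t)\) lie in \(K\), and \(\phi^{-1}=\Exp_{\mathcal{P}}\) is smooth on the convex ball \(\phi(\widetilde K)\supset\phi(K)\), where \(\widetilde K\Subset U\) is a slightly larger compact set. So \(\phi^{-1}\) is Lipschitz there with some constant \(L\), exactly as in the proof of Proposition~\ref{prop:glerp_second_order}. This gives
\[
    d_{\Gr}\bigl(\gamma(t),\widehat{\gamma}_h(t)\bigr)\le L\|E(t)\|\le LC_nMh^{n+1}.
\]
The only care needed is that \(K\) can be taken fixed as \(h\to0\), for example a small closed geodesic ball around \(\gamma(t_0)\) guaranteed by Proposition~\ref{prop:quantitative_admissibility}. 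This keeps the constants \(L\), \(M\) and the constant in Lemma~\ref{lemma:local_stability} uniform.

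The main obstacle is the \(h\)-uniformity of \(F_h^{(n+1)}\). Naively each \(t\)-derivative of a \(\GLERP\) step with parameter \((t-t_j)/(kh)\) produces a factor \(h^{-1}\). Lemma~\ref{lemma:local_stability} handles this through the rescaled expansion \(d^q\widetilde F_h/d\sigma^q=\mathcal{O}(h^q)\). I would therefore lean on that lemma as stated, checking only that its chart may be chosen as the same \(\phi\) and that the chart is independent of \(h\), so the two lemmas combine with a single constant.
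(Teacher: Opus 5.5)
Your proposal is correct and follows the same route as the paper's proof: the coordinate error $E$ vanishes at the nodes by the interpolation property, Lemma~\ref{lemma:local_stability} gives an $h$-uniform bound on its $(n+1)$-st derivative, Lemma~\ref{lemma:root_estimate} gives the $h^{n+1}$ coordinate bound, and a comparison of coordinate and geodesic distance on $K$ finishes the argument. Your extra details only make explicit what the paper states as local equivalence of distances on $K$: taking normal coordinates at $\gamma(t_0)$ with a Lipschitz bound for $\Exp_{\mathcal{P}}$ on a convex ball, and using Proposition~\ref{prop:quantitative_admissibility} to fix $K$ independently of $h$.
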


\begin{proof}
Choose a smooth coordinate chart \(\phi:U\to\bR^d\), \(d=r(m-r)\), whose domain contains the compact set \(K\) from Assumption \ref{assump:normal}.  Define the coordinate error
\[
    E(t)
    =
    \phi(\widehat{\gamma}_h(t))
    -
    \phi(\gamma(t)).
\]
By the interpolation property of \(\GIDER_n\), the interpolant agrees with the exact curve at every data node.  Hence
\[
    E(t_j)=0,
    \qquad j=0,1,\ldots,n.
\]

Since \(\gamma\in C^{n+1}\), the coordinate curve \(\phi\circ\gamma\) has a bounded \((n+1)\)-st derivative on \([t_0,t_n]\).  By Lemma \ref{lemma:local_stability}, the same is true for \(t\mapsto\phi(\widehat{\gamma}_h(t))\), with a bound independent of sufficiently small \(h\).  Therefore there exists \(M>0\), independent of \(h\), such that
\[
    \max_{t\in[t_0,t_n]}
    \left\|E^{(n+1)}(t)\right\|
    \le M.
\]
Applying Lemma \ref{lemma:root_estimate} gives
\[
    \|E(t)\|
    \le
    C h^{n+1},
    \qquad t\in[t_0,t_n].
\]

It remains to convert this coordinate estimate into a Grassmann distance estimate.  Since \(K\Subset U\) is compact and \(\phi\) is a smooth coordinate chart, the Riemannian distance and the Euclidean coordinate distance are locally equivalent on \(K\).  Thus there exists a constant \(C_K>0\) such that, for all \(\mathcal{X},\mathcal{Y}\in K\),
\[
    d_{\Gr}(\mathcal{X},\mathcal{Y})
    \le
    C_K
    \|\phi(\mathcal{X})-\phi(\mathcal{Y})\|.
\]
Taking \(\mathcal{X}=\gamma(t)\) and \(\mathcal{Y}=\widehat{\gamma}_h(t)\), we obtain
\[
    d_{\Gr}\bigl(\gamma(t),\widehat{\gamma}_h(t)\bigr)
    \le
    C_K\|E(t)\|
    \le
    C h^{n+1}.
\]
This proves the theorem.
\end{proof}

\begin{remark}[Normal-coordinate interpretation]\label{rem:normal_coordinate_interpretation}
The proof can also be viewed in normal coordinates.  Fix a point \(q\) in the common normal neighborhood and write nearby Grassmann points as \(\Exp_q u\).  In these coordinates, geodesic interpolation satisfies
\[
    \Log_q\left(
        \GLERP(\Exp_q u,\Exp_q v;\lambda)
    \right)
    =
    (1-\lambda)u+\lambda v+\mathcal{R}(u,v;\lambda),
\]
where the curvature correction satisfies \(\mathcal{R}(u,v;\lambda)=\mathcal{O}((\|u\|+\|v\|)^2)\), uniformly for bounded \(\lambda\).  Thus \(\GLERP\) is a smooth perturbation of ordinary linear interpolation in local coordinates.  The recursive \(\GIDER_n\) construction is therefore a curved-space analogue of Neville interpolation.  The curvature terms remain controlled under Assumption \ref{assump:normal}; together with the interpolation property at the \(n+1\) nodes, this gives the same local order \(n+1\) as the Euclidean interpolant.
\end{remark}

\begin{remark}[Projection-matrix form of the estimate]\label{rem:projection_accuracy}
If \(Y(t)\) and \(\widehat Y_h(t)\) are orthonormal representatives of \(\gamma(t)\) and \(\widehat{\gamma}_h(t)\), respectively, then the same accuracy may be expressed using the associated projectors \(P(t)=Y(t)Y(t)^T\) and \(\widehat P_h(t)=\widehat Y_h(t)\widehat Y_h(t)^T\).  Since the projection-matrix distance and the Grassmann geodesic distance are locally equivalent for nearby subspaces, the theorem also implies
\[
    \|P(t)-\widehat P_h(t)\|_F
    =
    \mathcal{O}(h^{n+1}),
    \qquad t\in[t_0,t_n].
\]
This form is often convenient in numerical experiments because it is independent of the choice of orthonormal bases.
\end{remark}

\section{Measuring Error for Matrix-Valued Data}\label{sec:error}

The Grassmann formulation is appropriate when each matrix represents a subspace rather than a particular basis.  Thus, if \(Y,\widehat Y\in\bR^{m\times r}\) have orthonormal columns, the entrywise error \(\norm{Y-\widehat Y}_F\) is generally not a meaningful Grassmann error.  Indeed, \(Y\) and \(YR\) represent the same point on \(\Gr(r,m)\) for every \(R\in O(r)\).  Error measures should therefore be invariant under the right action \(Y\mapsto YR\).

\subsection{Intrinsic Grassmann Distance}

The most natural error measure is the Grassmann geodesic distance.  Given two orthonormal representatives \(Y\) and \(\widehat Y\), let the singular values of \(Y^T\widehat Y\) be \(\sigma_j=\cos\theta_j\), where \(0\le \theta_j\le \pi/2\) are the principal angles between the two subspaces.  The intrinsic error is
\[
    d_{\Gr}(Y,\widehat Y)
    =
    \left(\sum_{j=1}^r\theta_j^2\right)^{1/2}.
\]
Here and below, \(d_{\Gr}(Y,\widehat Y)\) is a shorthand for \(d_{\Gr}(\spanop(Y),\spanop(\widehat Y))\).  For a numerical convergence study, one may use the maximum error
\[
    E_\infty(h)
    =
    \max_{t\in I} d_{\Gr}\bigl(Y(t),\widehat Y_h(t)\bigr),
\]
or an integrated error such as
\[
    E_2(h)
    =
    \left(
        \int_I d_{\Gr}\bigl(Y(t),\widehat Y_h(t)\bigr)^2\,dt
    \right)^{1/2},
\]
where \(I\) is the interval over which the reconstruction is assessed.  The observed convergence rate is then
\[
    \rho_h
    =
    \log_2\left(\frac{E(h)}{E(h/2)}\right).
\]
Theorem \ref{thm:gider_accuracy} predicts \(\rho_h\to n+1\) for \(\GIDER_n\), provided that the exact curve is sufficiently smooth and that the data and all intermediate geodesic constructions remain in a common normal neighborhood.

\subsection{Projection-Matrix Error}

A convenient basis-invariant alternative is to represent each subspace by its orthogonal projector.  For orthonormal representatives \(Y\) and \(\widehat Y\), define
\[
    P=YY^T,
    \qquad
    \widehat P=\widehat Y\widehat Y^T.
\]
The projection-matrix error
\[
    e_P(Y,\widehat Y)
    =
    \norm{P-\widehat P}_F
\]
is independent of the choice of orthonormal bases.  Its relation to the principal angles is
\[
    \frac{1}{\sqrt{2}}\norm{P-\widehat P}_F
    =
    \left(\sum_{j=1}^r\sin^2\theta_j\right)^{1/2}.
\]
For small errors, \(\sin\theta_j\sim\theta_j\), and hence the projection-matrix error and the Grassmann geodesic distance are locally equivalent.  Consequently, the intrinsic estimate
\[
    d_{\Gr}\bigl(Y(t),\widehat Y_h(t)\bigr)
    =
    \mathcal{O}(h^{n+1})
\]
is equivalent, in a sufficiently small neighborhood, to
\[
    \norm{
        Y(t)Y(t)^T
        -
        \widehat Y_h(t)\widehat Y_h(t)^T
    }_F
    =
    \mathcal{O}(h^{n+1}).
\]
The projection form is often convenient in computations because it avoids any alignment of the two bases.

\subsection{Aligned Basis Error}

If one wants to compare matrix representatives directly, the appropriate invariant version is the orthogonal Procrustes error
\[
    e_{\mathrm{align}}(Y,\widehat Y)
    =
    \min_{R\in O(r)}
    \norm{Y-\widehat YR}_F.
\]
This minimization removes the arbitrary choice of basis inside the reconstructed subspace.  If \(Y^T\widehat Y=A\Sigma B^T\) is a singular value decomposition, then an optimizer is \(R=BA^T\), and the minimum can be evaluated explicitly.  Although this aligned error is useful for diagnostic purposes, the intrinsic distance and the projection-matrix error are usually more natural for Grassmann-valued interpolation.  If the entries of \(Y\) themselves carry physical meaning, rather than only the subspace \(\spanop(Y)\), then the problem should be formulated on a different matrix manifold, such as the Stiefel manifold or a fixed-rank matrix manifold.

\section{Comparison of Several Approaches}
\label{sec:comparison_three_methods}

We compare three approaches for interpolating Grassmann-valued data: the recursive geodesic construction \(\GIDER_n\), tangent-space interpolation \(\mathrm{TSI}_n\), and projection-matrix interpolation followed by rank-\(r\) projection, denoted by \(\mathrm{PROJ}_n\).  All three methods are invariant under changes of orthonormal basis and are therefore appropriate for data on \(\Gr(r,m)\).  They differ, however, in the representation in which interpolation is performed.  The method \(\GIDER_n\) is recursive and geodesic, \(\mathrm{TSI}_n\) is a single-chart tangent-space method, and \(\mathrm{PROJ}_n\) is an extrinsic method based on the projector embedding of the Grassmann manifold.  The distinction is important because these methods may have the same local order of accuracy for smooth and densely sampled data while having different numerical costs, auxiliary choices, and robustness properties.

Let \(\mathcal{Y}_0,\ldots,\mathcal{Y}_n\in\Gr(r,m)\) be sampled at \(t_0,\ldots,t_n\).  The proposed \(\GIDER_n\) method is defined by a recursive Neville-type construction in which each affine interpolation step is replaced by \(\GLERP\).  One sets
\[
    \mathcal{I}_j^0(t)=\mathcal{Y}_j
\]
and recursively defines
\[
    \mathcal{I}_j^k(t)
    =
    \GLERP\left(
        \mathcal{I}_j^{k-1}(t),
        \mathcal{I}_{j+1}^{k-1}(t);
        \frac{t-t_j}{t_{j+k}-t_j}
    \right),
    \qquad k=1,\ldots,n.
\]
The final interpolant is \(\mathcal{I}_0^n(t)\).  Thus \(\GIDER_n\) constructs the interpolant through successive geodesic interpolations between intermediate Grassmann points.  No fixed reference tangent space is chosen; the construction moves recursively on the manifold.  This makes \(\GIDER_n\) a direct Grassmann analogue of Neville interpolation.

Tangent-space interpolation proceeds differently.  It first selects a reference subspace \(\mathcal{Y}_{\rm ref}\), for example one endpoint, the middle point of the stencil, or the data point closest to the target parameter.  The stencil data are then mapped to the tangent space \(T_{\mathcal{Y}_{\rm ref}}\Gr(r,m)\) by
\[
    \Delta_j=\Log_{\mathcal{Y}_{\rm ref}}(\mathcal{Y}_j),
    \qquad j=0,\ldots,n.
\]
Ordinary Lagrange interpolation is performed in this vector space:
\[
    \Delta(t)=\sum_{j=0}^n L_j(t)\Delta_j,
    \qquad
    L_j(t)=\prod_{\ell\ne j}\frac{t-t_\ell}{t_j-t_\ell},
\]
and the result is mapped back to the Grassmann manifold by
\[
    \widehat{\mathcal{Y}}_{\rm TSI}(t)
    =
    \Exp_{\mathcal{Y}_{\rm ref}}\bigl(\Delta(t)\bigr).
\]
The main advantage of \(\mathrm{TSI}_n\) is its simplicity.  After the reference tangent space has been chosen, the problem reduces to ordinary vector-valued polynomial interpolation.  The method is also efficient when many target points are evaluated using the same stencil, since the logarithms of the data can be computed once and reused.

The principal limitation of \(\mathrm{TSI}_n\) is its dependence on the reference subspace.  Choosing \(\mathcal{Y}_0\), choosing a midpoint of the stencil, or choosing a target-dependent reference generally gives different interpolants.  These differences are typically higher order when all data are sufficiently close, but they can become visible for wider stencils, strongly curved curves, or data near the boundary of a normal neighborhood.  The method also requires all stencil points to lie inside the normal neighborhood of \(\mathcal{Y}_{\rm ref}\), and the tangent polynomial \(\Delta(t)\) may leave the region where the chosen normal coordinates accurately represent the manifold geometry.

Projection-matrix interpolation uses the embedding of \(\Gr(r,m)\) into the space of symmetric matrices.  If \(\mathcal{Y}_j=\spanop(Y_j)\), then the orthogonal projector
\[
    P_j=Y_jY_j^T
\]
is independent of the choice of orthonormal basis for the subspace.  One interpolates the projectors in the ambient vector space:
\[
    \widetilde P(t)=\sum_{j=0}^n L_j(t)P_j.
\]
The matrix \(\widetilde P(t)\) is symmetric but is generally not an orthogonal projector and need not have rank \(r\).  The method therefore returns to \(\Gr(r,m)\) by taking the leading \(r\) eigenvectors of \(\widetilde P(t)\).  If \(U_r(t)\in\bR^{m\times r}\) contains these eigenvectors, then
\[
    \widehat{\mathcal{Y}}_{\rm PROJ}(t)=\spanop(U_r(t)).
\]
This method is easy to implement and avoids Grassmann logarithm maps.  It is, however, extrinsic: interpolation is performed in the ambient linear space of symmetric matrices rather than by following geodesics on \(\Gr(r,m)\).  The final rank-\(r\) projection is stable when there is a clear spectral gap between the \(r\)-th and \((r+1)\)-st eigenvalues of \(\widetilde P(t)\), but it can be sensitive when this gap is small.

The three methods are locally consistent in a smooth small-data regime.  Suppose that all data lie in a normal neighborhood of a point \(q\), and write
\[
    \mathcal{Y}_j=\Exp_q(u_j),
    \qquad
    u_j\in T_q\Gr(r,m),
    \qquad
    \norm{u_j}=\mathcal{O}(h).
\]
In these local coordinates, geodesic interpolation has the expansion
\[
    \Log_q\left(
        \GLERP(\Exp_q u,\Exp_q v;\lambda)
    \right)
    =
    (1-\lambda)u+\lambda v+\mathcal{R}(u,v;\lambda),
\]
where \(\mathcal{R}\) is a higher-order curvature-dependent remainder.  Hence \(\GIDER_n\) agrees, to leading order, with ordinary Neville interpolation of the coordinate data.  Tangent-space interpolation also performs polynomial interpolation in a tangent space.  Projection-matrix interpolation is locally consistent because the map \(\mathcal{Y}\mapsto YY^T\) is a smooth embedding of \(\Gr(r,m)\) into the space of symmetric matrices.  Under appropriate local regularity assumptions, all three methods therefore have the same expected order, namely order \(n+1\) for a sufficiently smooth Grassmann-valued curve.  The numerical experiments in Section~\ref{sec:numerical_example} illustrate this behavior.

The close agreement between \(\GIDER_n\) and \(\mathrm{TSI}_n\) in smooth examples is therefore not surprising.  Both methods reduce locally to polynomial interpolation in a vector-space representation.  The difference is how this vector-space behavior is realized.  The method \(\mathrm{TSI}_n\) realizes it explicitly by choosing one tangent space and interpolating there.  The method \(\GIDER_n\) realizes it implicitly through repeated geodesic interpolation between intermediate subspaces.  Thus the two methods have the same leading-order behavior in a local chart, but not the same construction.

This distinction becomes important outside the ideal local regime.  Tangent-space interpolation may be preferable when the data remain close to a well-chosen reference subspace and computational efficiency is important.  It is particularly convenient for repeated evaluations from the same stencil, because the tangent-space representation of the data can be reused.  Its disadvantages are the dependence on the reference point and the reliance on one normal coordinate chart.  If the stencil is wide or the curve is strongly curved, the reference point may be far from part of the data, and the tangent polynomial may no longer describe the manifold geometry accurately.

The strengths and weaknesses of \(\GIDER_n\) are complementary.  Its main advantage is that it does not commit to one tangent space.  Every step is a geodesic interpolation between two Grassmann points, so the method is geometrically recursive and closely mirrors the replacement of Euclidean line segments by geodesic segments.  This structure is especially suitable for an ENO-type extension: different candidate stencils can be used to construct complete Grassmann-valued curves, and their oscillation can be compared using an intrinsic variation measure.  The cost is that \(\GIDER_n\) generally requires more exponential and logarithm evaluations than \(\mathrm{TSI}_n\).  Moreover, because Neville-type interpolation may use parameters outside \([0,1]\), the recursive construction can involve geodesic extrapolation.  The original data points and the intermediate subspaces generated by the recursion must therefore remain in a region where the required logarithms are well-defined and numerically stable.

The method \(\mathrm{PROJ}_n\) provides a useful extrinsic baseline.  It avoids choosing a tangent reference point and avoids repeated geodesic computations.  It is also naturally basis-invariant because projectors represent subspaces rather than bases.  These features make it attractive as a simple implementation.  Its main drawback is that the interpolation is not performed on the manifold itself.  The intermediate matrix \(\widetilde P(t)\) may have eigenvalues outside \([0,1]\), and the final step of taking leading eigenvectors introduces a nonlinear spectral projection.  Consequently, \(\mathrm{PROJ}_n\) can have the correct local order while having a different error constant and a sensitivity determined by the eigenvalue gap of \(\widetilde P(t)\).

The main distinctions are summarized in Table~\ref{tab:method_comparison}.

\begin{table}[t]
\centering
\caption{Comparison of three Grassmann interpolation strategies.}
\label{tab:method_comparison}
\small
\resizebox{\textwidth}{!}{%
\begin{tabular}{p{0.20\textwidth}|p{0.24\textwidth}|p{0.24\textwidth}|p{0.24\textwidth}}
\toprule
 & \(\GIDER_n\) & \(\mathrm{TSI}_n\) & \(\mathrm{PROJ}_n\) \\
\midrule
Basic idea
&
Recursive Neville interpolation with each affine step replaced by \(\GLERP\).
&
Map all stencil data to one tangent space, interpolate there, and map back by \(\Exp\).
&
Interpolate orthogonal projectors and project the result back to rank \(r\).
\\[2mm]
Geometry
&
Intrinsic and geodesic at each recursive step.
&
Intrinsic after a reference tangent space has been chosen.
&
Basis-invariant but extrinsic.
\\[2mm]
Reference choice
&
No single reference tangent space is required.
&
Requires a reference subspace \(\mathcal{Y}_{\rm ref}\).
&
No tangent-space reference is required.
\\[2mm]
Return to \(\Gr(r,m)\)
&
Automatic through recursive \(\GLERP\) evaluations.
&
Through \(\Exp_{\mathcal{Y}_{\rm ref}}\).
&
Through leading-eigenvector rank-\(r\) projection.
\\[2mm]
Computational cost
&
Requires multiple geodesic interpolation steps at each target point.
&
Efficient once the stencil has been mapped to the reference tangent space.
&
Requires ambient interpolation and an eigenvalue decomposition.
\\[2mm]
Expected local order
&
Order \(n+1\), under a normal-neighborhood assumption.
&
Order \(n+1\), under a normal-neighborhood assumption.
&
Order \(n+1\) in a regular local regime with a stable spectral projection.
\\[2mm]
Main numerical risk
&
Recursive geodesic extrapolations may leave the normal neighborhood.
&
The tangent polynomial may leave the valid region of the chosen chart.
&
The rank-\(r\) projection may be sensitive when the spectral gap is small.
\\[2mm]
Main advantage
&
Recursive geodesic structure and natural ENO extension.
&
Simplicity and efficiency for repeated evaluations.
&
Simple basis-invariant extrinsic implementation.
\\[2mm]
ENO extension
&
Natural, since each candidate is an intrinsic recursive curve.
&
Possible, but stencil comparisons may depend on reference choices.
&
Possible, but the variation criterion is extrinsic unless reformulated intrinsically.
\\
\bottomrule
\end{tabular}%
}
\end{table}

Consequently, \(\mathrm{TSI}_n\) and \(\mathrm{PROJ}_n\) are useful comparison methods.  Tangent-space interpolation is the closest intrinsic baseline and explains why its numerical results can be nearly identical to those of \(\GIDER_n\) for dense smooth data.  Projection-matrix interpolation gives a simple extrinsic benchmark that is often robust and easy to implement.  The advantage of \(\GIDER_n\) is not that it necessarily produces a different asymptotic order in smooth local tests, but that it provides a recursive geodesic structure.  This structure mirrors Neville interpolation without committing to a single tangent space and gives a natural foundation for Grassmann ENO interpolation.

\section{Numerical Example}\label{sec:numerical_example}

We present a numerical experiment to verify the convergence behavior of the proposed \(\GIDER_n\) interpolants and to compare them with tangent-space interpolation, denoted by \(\mathrm{TSI}_n\), and projection-matrix interpolation followed by rank-\(r\) projection, denoted by \(\mathrm{PROJ}_n\).  The test curve is known at arbitrary parameter values, so the interpolation error can be evaluated directly.

We consider the Grassmann manifold \(\Gr(2,5)\).  Let \(e_1,\ldots,e_5\) be the standard basis of \(\bR^5\), and set \(Y_\ast=[e_1,e_2]\in\bR^{5\times 2}\).  We generate a smooth curve of two-dimensional subspaces by applying a time-dependent orthogonal transformation to this reference subspace.  Let \(E_{ij}\) denote the \(5\times 5\) matrix with entry \(1\) in position \((i,j)\) and zero elsewhere.  Define the skew-symmetric matrices
\[
    \Omega_1=(E_{31}-E_{13})+\frac{1}{2}(E_{42}-E_{24}),\qquad
    \Omega_2=(E_{41}-E_{14})+(E_{52}-E_{25}),
\]
and
\[
    \Omega_3=(E_{51}-E_{15})+\frac{7}{10}(E_{32}-E_{23}).
\]
For \(t\in[0,1]\), define
\[
    \Omega(t)
    =
    0.25\sin(2\pi t)\Omega_1
    +
    0.20\cos(3\pi t)\Omega_2
    +
    0.15t\,\Omega_3.
\]
Since \(\Omega(t)\) is skew-symmetric, \(Q(t)=\exp(\Omega(t))\) is orthogonal.  The exact Grassmann-valued curve is then
\[
    \gamma(t)=\spanop(Y(t)),\qquad
    Y(t)=Q(t)Y_\ast.
\]
The columns of \(Y(t)\) are orthonormal for all \(t\).  The curve is smooth and non-geodesic, and is therefore suitable for observing the interpolation orders.

For a positive integer \(N\), let \(h=1/N\) and sample the exact curve at the uniform grid \(t_i=ih\), \(i=0,\ldots,N\).  The data are \(\mathcal{Y}_i=\gamma(t_i)\).  For each order \(n=1,2,3,4\), we compare three methods.  The first method is the recursive geodesic interpolant \(\GIDER_n\).  The second method, \(\mathrm{TSI}_n\), maps the stencil data to one reference tangent space, applies ordinary Lagrange interpolation there, and maps the result back by the exponential map.  The third method, \(\mathrm{PROJ}_n\), interpolates the projection matrices \(P_i=Y_iY_i^T\) entrywise and then returns to \(\Gr(2,5)\) by taking the leading two eigenvectors of the interpolated symmetric matrix.

On each interval \([t_i,t_{i+1}]\), the reconstruction uses the forward stencil \(\{\mathcal{Y}_i,\mathcal{Y}_{i+1},\ldots,\mathcal{Y}_{i+n}\}\) whenever \(i+n\le N\).  Near the right boundary, we use the last available stencil \(\{\mathcal{Y}_{N-n},\ldots,\mathcal{Y}_N\}\).  The error is evaluated on a refined set of test points.  On each coarse interval, we take \(K=10\) equally spaced subintervals and evaluate both the exact and interpolated subspaces at \(t_{i,\ell}=t_i+\ell h/K\), \(\ell=0,\ldots,K\).

The intrinsic pointwise error is measured by the Grassmann distance
\[
    e_{\Gr}(t_{i,\ell})
    =
    d_{\Gr}\bigl(Y(t_{i,\ell}),\widehat Y_h(t_{i,\ell})\bigr),
\]
where the principal angles are obtained from the singular values of \(Y(t_{i,\ell})^T\widehat Y_h(t_{i,\ell})\).  The maximum intrinsic error is
\[
    E_{\Gr}(h)
    =
    \max_{i,\ell} e_{\Gr}(t_{i,\ell}).
\]
For comparison, we also report the projection-matrix error.  Given the exact and interpolated orthonormal representatives \(Y(t)\) and \(\widehat Y_h(t)\), respectively, define
\[
    e_P(t)
    =
    \left\|
        Y(t)Y(t)^T
        -
        \widehat Y_h(t)\widehat Y_h(t)^T
    \right\|_F.
\]
This quantity is invariant under changes of basis \(Y\mapsto YR\), \(R\in O(r)\), and is therefore a valid Grassmann error measure.  If \(\theta_1,\ldots,\theta_r\) are the principal angles, then
\[
    \frac{1}{\sqrt{2}}e_P(t)
    =
    \left(\sum_{j=1}^r\sin^2\theta_j(t)\right)^{1/2}.
\]
Thus \(e_P(t)/\sqrt{2}\) is asymptotically equivalent to the intrinsic Grassmann distance for small errors.  The maximum projection error is defined by
\[
    E_P(h)=\max_{i,\ell} e_P(t_{i,\ell}).
\]
The observed convergence rate is computed by
\[
    \rho_h
    =
    \log_2\left(\frac{E(h)}{E(h/2)}\right).
\]

Table \ref{tab:comparison_intrinsic} reports the errors measured by the intrinsic Grassmann distance.  The results show the expected convergence rates: order \(n+1\) for each of the three methods in the pre-asymptotic and asymptotic regimes.  For the highest-order methods on the finest grids, the intrinsic-distance error becomes sensitive to roundoff because the computation of the principal angles uses \(\theta_j=\arccos(\sigma_j)\), where \(\sigma_j\) is extremely close to one.  This explains the degraded rates in a few entries of the intrinsic-distance table.

\begin{table}[t]
\centering
\caption{Maximum error measured by the intrinsic Grassmann distance.}
\label{tab:comparison_intrinsic}
\small
\resizebox{\textwidth}{!}{
\begin{tabular}{cc|cc|cc|cc}
\toprule
Order & \(N\)
&
\(E_{\GIDER}\) & rate
&
\(E_{\mathrm{TSI}}\) & rate
&
\(E_{\mathrm{PROJ}}\) & rate
\\
\midrule
1 & 16  & \(1.2843{\rm e}{-02}\) & --     & \(1.2843{\rm e}{-02}\) & --     & \(1.2843{\rm e}{-02}\) & --     \\
1 & 32  & \(3.2596{\rm e}{-03}\) & 1.9782 & \(3.2596{\rm e}{-03}\) & 1.9782 & \(3.2596{\rm e}{-03}\) & 1.9782 \\
1 & 64  & \(8.1393{\rm e}{-04}\) & 2.0017 & \(8.1393{\rm e}{-04}\) & 2.0017 & \(8.1393{\rm e}{-04}\) & 2.0017 \\
1 & 128 & \(2.0392{\rm e}{-04}\) & 1.9969 & \(2.0392{\rm e}{-04}\) & 1.9969 & \(2.0392{\rm e}{-04}\) & 1.9969 \\
1 & 256 & \(5.1006{\rm e}{-05}\) & 1.9992 & \(5.1006{\rm e}{-05}\) & 1.9992 & \(5.1006{\rm e}{-05}\) & 1.9992 \\
\midrule
2 & 16  & \(3.7682{\rm e}{-03}\) & --     & \(3.7710{\rm e}{-03}\) & --     & \(4.6817{\rm e}{-03}\) & --     \\
2 & 32  & \(4.7903{\rm e}{-04}\) & 2.9757 & \(4.7921{\rm e}{-04}\) & 2.9762 & \(6.0778{\rm e}{-04}\) & 2.9454 \\
2 & 64  & \(6.0127{\rm e}{-05}\) & 2.9940 & \(6.0138{\rm e}{-05}\) & 2.9943 & \(7.6714{\rm e}{-05}\) & 2.9860 \\
2 & 128 & \(7.5233{\rm e}{-06}\) & 2.9986 & \(7.5240{\rm e}{-06}\) & 2.9987 & \(9.6125{\rm e}{-06}\) & 2.9965 \\
2 & 256 & \(9.4125{\rm e}{-07}\) & 2.9987 & \(9.4102{\rm e}{-07}\) & 2.9992 & \(1.2023{\rm e}{-06}\) & 2.9991 \\
\midrule
3 & 16  & \(1.3831{\rm e}{-03}\) & --     & \(1.3658{\rm e}{-03}\) & --     & \(1.6919{\rm e}{-03}\) & --     \\
3 & 32  & \(8.8460{\rm e}{-05}\) & 3.9668 & \(8.7738{\rm e}{-05}\) & 3.9604 & \(1.1211{\rm e}{-04}\) & 3.9156 \\
3 & 64  & \(5.5740{\rm e}{-06}\) & 3.9882 & \(5.5159{\rm e}{-06}\) & 3.9915 & \(7.2194{\rm e}{-06}\) & 3.9569 \\
3 & 128 & \(3.4915{\rm e}{-07}\) & 3.9968 & \(3.4563{\rm e}{-07}\) & 3.9963 & \(4.5442{\rm e}{-07}\) & 3.9898 \\
3 & 256 & \(4.2147{\rm e}{-08}\) & 3.0503 & \(3.9425{\rm e}{-08}\) & 3.1321 & \(4.2147{\rm e}{-08}\) & 3.4305 \\
\midrule
4 & 16  & \(5.8983{\rm e}{-04}\) & --     & \(5.7253{\rm e}{-04}\) & --     & \(1.5899{\rm e}{-03}\) & --     \\
4 & 32  & \(1.9561{\rm e}{-05}\) & 4.9142 & \(1.8811{\rm e}{-05}\) & 4.9277 & \(6.1610{\rm e}{-05}\) & 4.6897 \\
4 & 64  & \(6.2033{\rm e}{-07}\) & 4.9788 & \(5.9455{\rm e}{-07}\) & 4.9836 & \(2.0406{\rm e}{-06}\) & 4.9161 \\
4 & 128 & \(3.9425{\rm e}{-08}\) & 3.9759 & \(4.2147{\rm e}{-08}\) & 3.8183 & \(6.4953{\rm e}{-08}\) & 4.9735 \\
4 & 256 & \(4.2147{\rm e}{-08}\) & -0.0963 & \(3.9425{\rm e}{-08}\) & 0.0963 & \(3.9425{\rm e}{-08}\) & 0.7203 \\
\bottomrule
\end{tabular}
}
\end{table}

Table \ref{tab:comparison_projection} reports the same experiment using the projection-matrix error.  This error gives a cleaner view of the asymptotic behavior, especially for the higher-order methods.  The results confirm second-order convergence for \(n=1\), third-order convergence for \(n=2\), fourth-order convergence for \(n=3\), and fifth-order convergence for \(n=4\).  The projection-matrix results also show that \(\GIDER_n\) and \(\mathrm{TSI}_n\) have nearly identical errors in this smooth local test.  The \(\mathrm{PROJ}_n\) method achieves the same formal order but has a somewhat larger error constant, especially for \(n=4\).

\begin{table}[t]
\centering
\caption{Maximum error measured by the projection-matrix distance \(\|YY^T-\widehat Y\widehat Y^T\|_F\).}
\label{tab:comparison_projection}
\small
\resizebox{\textwidth}{!}{
\begin{tabular}{cc|cc|cc|cc}
\toprule
Order & \(N\)
&
\(E_{\GIDER}\) & rate
&
\(E_{\mathrm{TSI}}\) & rate
&
\(E_{\mathrm{PROJ}}\) & rate
\\
\midrule
1 & 16  & \(1.8163{\rm e}{-02}\) & --     & \(1.8163{\rm e}{-02}\) & --     & \(1.8163{\rm e}{-02}\) & --     \\
1 & 32  & \(4.6098{\rm e}{-03}\) & 1.9782 & \(4.6098{\rm e}{-03}\) & 1.9782 & \(4.6098{\rm e}{-03}\) & 1.9782 \\
1 & 64  & \(1.1511{\rm e}{-03}\) & 2.0017 & \(1.1511{\rm e}{-03}\) & 2.0017 & \(1.1511{\rm e}{-03}\) & 2.0017 \\
1 & 128 & \(2.8838{\rm e}{-04}\) & 1.9969 & \(2.8838{\rm e}{-04}\) & 1.9969 & \(2.8838{\rm e}{-04}\) & 1.9969 \\
1 & 256 & \(7.2133{\rm e}{-05}\) & 1.9992 & \(7.2133{\rm e}{-05}\) & 1.9992 & \(7.2133{\rm e}{-05}\) & 1.9992 \\
\midrule
2 & 16  & \(5.3290{\rm e}{-03}\) & --     & \(5.3329{\rm e}{-03}\) & --     & \(6.6209{\rm e}{-03}\) & --     \\
2 & 32  & \(6.7746{\rm e}{-04}\) & 2.9757 & \(6.7771{\rm e}{-04}\) & 2.9762 & \(8.5953{\rm e}{-04}\) & 2.9454 \\
2 & 64  & \(8.5033{\rm e}{-05}\) & 2.9940 & \(8.5048{\rm e}{-05}\) & 2.9943 & \(1.0849{\rm e}{-04}\) & 2.9860 \\
2 & 128 & \(1.0640{\rm e}{-05}\) & 2.9986 & \(1.0641{\rm e}{-05}\) & 2.9987 & \(1.3594{\rm e}{-05}\) & 2.9965 \\
2 & 256 & \(1.3303{\rm e}{-06}\) & 2.9997 & \(1.3303{\rm e}{-06}\) & 2.9997 & \(1.7003{\rm e}{-06}\) & 2.9991 \\
\midrule
3 & 16  & \(1.9560{\rm e}{-03}\) & --     & \(1.9316{\rm e}{-03}\) & --     & \(2.3927{\rm e}{-03}\) & --     \\
3 & 32  & \(1.2510{\rm e}{-04}\) & 3.9668 & \(1.2408{\rm e}{-04}\) & 3.9604 & \(1.5855{\rm e}{-04}\) & 3.9156 \\
3 & 64  & \(7.8827{\rm e}{-06}\) & 3.9883 & \(7.8008{\rm e}{-06}\) & 3.9915 & \(1.0210{\rm e}{-05}\) & 3.9569 \\
3 & 128 & \(4.9316{\rm e}{-07}\) & 3.9986 & \(4.8857{\rm e}{-07}\) & 3.9970 & \(6.4265{\rm e}{-07}\) & 3.9898 \\
3 & 256 & \(3.0841{\rm e}{-08}\) & 3.9991 & \(3.0557{\rm e}{-08}\) & 3.9990 & \(4.0208{\rm e}{-08}\) & 3.9985 \\
\midrule
4 & 16  & \(8.3414{\rm e}{-04}\) & --     & \(8.0968{\rm e}{-04}\) & --     & \(2.2485{\rm e}{-03}\) & --     \\
4 & 32  & \(2.7664{\rm e}{-05}\) & 4.9142 & \(2.6603{\rm e}{-05}\) & 4.9277 & \(8.7129{\rm e}{-05}\) & 4.6897 \\
4 & 64  & \(8.7740{\rm e}{-07}\) & 4.9786 & \(8.4053{\rm e}{-07}\) & 4.9841 & \(2.8860{\rm e}{-06}\) & 4.9160 \\
4 & 128 & \(2.7538{\rm e}{-08}\) & 4.9937 & \(2.6322{\rm e}{-08}\) & 4.9969 & \(9.1537{\rm e}{-08}\) & 4.9786 \\
4 & 256 & \(8.6166{\rm e}{-10}\) & 4.9982 & \(8.2277{\rm e}{-10}\) & 4.9996 & \(2.8713{\rm e}{-09}\) & 4.9946 \\
\bottomrule
\end{tabular}
}
\end{table}

The results support the theoretical accuracy statement for the proposed \(\GIDER_n\) construction.  In the projection-matrix error, the observed orders are very close to \(n+1\) for all tested values of \(n\).  The comparison also illustrates the close relationship between \(\GIDER_n\) and tangent-space interpolation in a smooth local regime.  Their errors are nearly indistinguishable, which is consistent with the fact that both methods reduce locally to polynomial interpolation in tangent coordinates.  The projection-matrix method also achieves the expected order, but with larger constants in this example.  This is consistent with its extrinsic construction: it interpolates in the ambient space of symmetric matrices and then applies a rank-\(r\) spectral projection, rather than building the curve directly from Grassmann geodesics.

The intrinsic Grassmann-distance table should be interpreted with some care for the smallest errors.  When the exact and interpolated subspaces are extremely close, the singular values of \(Y^T\widehat Y\) are nearly one.  Computing the principal angles by \(\theta_j=\arccos(\sigma_j)\) then becomes sensitive to floating-point roundoff.  This sensitivity explains the loss of clean fifth-order behavior in the final rows of Table \ref{tab:comparison_intrinsic}.  The projection-matrix error avoids this inverse-cosine sensitivity and therefore gives a more reliable diagnostic of the high-order convergence in the finest-grid regime.

\section{Toward Grassmann ENO Interpolation}\label{sec:geno}

The recursive \(\GIDER_n\) construction provides a family of high-order interpolants on the Grassmann manifold.  As in Euclidean and spherical interpolation, however, a high-order reconstruction may become oscillatory when the underlying curve has a kink, a sharp turn, or a loss of smoothness.  It is therefore natural to combine the recursive Grassmann interpolation with the essentially non-oscillatory philosophy.  We refer to the resulting idea as Grassmann ENO interpolation, or GENO.

Suppose that one wants to reconstruct the curve between two consecutive data points \(\mathcal{Y}_i\) and \(\mathcal{Y}_{i+1}\).  From neighboring data, one constructs several candidate \(\GIDER_n\) curves on different stencils, each of which interpolates \(\mathcal{Y}_i\) and \(\mathcal{Y}_{i+1}\) but uses different additional points to the left or right.  The GENO selection principle is to choose the candidate whose intrinsic variation on the target interval is smallest.  This follows the same rationale as the spherical SENO construction in \cite{FongLeung2023}: a candidate that crosses a nonsmooth region tends to have a larger geometric variation and is therefore less desirable.

Let \(\widehat\gamma(t)\) be one candidate reconstruction on the interval \([t_i,t_{i+1}]\), and let \(s_0,\ldots,s_K\) be a fine partition of this interval.  Its discrete Grassmann variation is defined by
\[
    \mathcal{V}(\widehat\gamma)
    =
    \sum_{\ell=0}^{K-1}
    d_{\Gr}\bigl(
        \widehat\gamma(s_\ell),
        \widehat\gamma(s_{\ell+1})
    \bigr).
\]
Equivalently, if \(\widehat P(t)\) denotes the orthogonal projector associated with \(\widehat\gamma(t)\), one may use the projection-based variation
\[
    \mathcal{V}_P(\widehat\gamma)
    =
    \frac{1}{\sqrt{2}}
    \sum_{\ell=0}^{K-1}
    \norm{
        \widehat P(s_\ell)
        -
        \widehat P(s_{\ell+1})
    }_F.
\]
For small principal angles, \(\mathcal{V}\) and \(\mathcal{V}_P\) are locally equivalent.  The projection-based form is particularly simple to implement, while the geodesic-distance form is more directly intrinsic.

More explicitly, let \(\mathcal{S}_q\) denote the admissible stencils that contain both endpoints \(\mathcal{Y}_i\) and \(\mathcal{Y}_{i+1}\), and let \(\widehat\gamma_q\) be the \(\GIDER_n\) candidate generated from stencil \(\mathcal{S}_q\).  The GENO interpolant is defined by
\[
    \widehat\gamma_{\mathrm{GENO}}
    =
    \widehat\gamma_{q^\ast},
    \qquad
    q^\ast
    =
    \argmin_q \mathcal{V}(\widehat\gamma_q),
\]
or, equivalently in implementation, by minimizing \(\mathcal{V}_P(\widehat\gamma_q)\).  Since every candidate is a \(\GIDER_n\) reconstruction on a valid stencil, the selected curve remains Grassmann-valued and interpolates the two endpoints.  When the underlying curve is smooth across all candidate stencils, the variations are comparable and the selected candidate retains the high-order accuracy of \(\GIDER_n\).  When a kink or sharp turn lies within some candidate stencils, the variation criterion is designed to avoid those candidates and to select a less oscillatory reconstruction.

As with \(\GIDER_n\), the GENO construction is local.  All candidate reconstructions and all intermediate geodesic extrapolations must remain in a common normal neighborhood.  In practice, this requires sufficiently dense sampling and relevant principal angles bounded away from \(\pi/2\).  Under this admissibility condition, the GENO procedure gives a direct Grassmann analogue of SENO: high-order interpolation is retained in smooth regions, while the stencil selection suppresses spurious oscillations near nonsmooth features.

\section{Conclusion}
\label{sec:conclusion}

We have developed a Grassmann-manifold analogue of the \(\SLERP\)--based interpolation philosophy.  The basic interpolant, \(\GLERP\), is defined by the Grassmann exponential and logarithm maps and follows the constant-speed geodesic between two nearby subspaces.  It is second-order accurate for smooth Grassmann-valued curves under a standard normal-neighborhood assumption.  Higher-order interpolants are obtained through the recursive \(\GIDER_n\) construction, in which each affine step of Neville interpolation is replaced by \(\GLERP\).  The resulting method interpolates all \(n+1\) data subspaces exactly and is locally of order \(n+1\) for sufficiently smooth curves.

A central point is that matrix data representing subspaces should be interpolated and compared on the Grassmann manifold, not by entrywise operations on arbitrary matrix representatives.  Intrinsic Grassmann distance, projection-matrix error, and Procrustes-aligned basis error provide basis-invariant diagnostics.  The comparison with tangent-space interpolation and projection-matrix interpolation shows that all three approaches have the same expected local order in a regular smooth regime, while differing in geometry, computational cost, and robustness.  Tangent-space interpolation is simple and efficient once a reference tangent space is chosen, projection-matrix interpolation is an easily implemented extrinsic baseline, and \(\GIDER_n\) provides a recursive geodesic construction that avoids committing to a single tangent space.

The numerical example on \(\Gr(2,5)\) confirms the predicted convergence rates.  In smooth local tests, \(\GIDER_n\) and tangent-space interpolation are nearly indistinguishable in accuracy, while projection-matrix interpolation achieves the same order with somewhat larger constants in the reported example.  The recursive structure of \(\GIDER_n\) also suggests a natural route to Grassmann ENO interpolation, where candidate geodesic reconstructions are selected by minimizing intrinsic or projection-based variation.  Developing such GENO schemes for nonsmooth Grassmann-valued data and applying them to subspace-valued numerical reconstruction problems are natural directions for future work.

\section*{Computational Methodology and Disclosure}

This research utilized GPT-5.5 (Thinking) to perform primary computational modeling and mathematical derivation. The author acted as the principal investigator, defining the research parameters and theoretical scope. The author notes that while the computational workflow was executed via AI, the author maintains full responsibility for the study's conclusions. The author acknowledges that the mathematical depth of the generated results exceeds current manual verification capabilities and presents these findings as AI-assisted hypotheses subject to future formal peer verification. Consequently, this article is intended solely for dissemination as a preprint on arXiv and is not submitted to peer-reviewed journals in its current form.


\begin{thebibliography}{99}

\bibitem{FongLeung2023}
K. W. Fong and S. Leung,
\newblock Spherical essentially non-oscillatory (SENO) interpolation,
\newblock \emph{Journal of Scientific Computing}, 94:28, 2023.

\bibitem{EdelmanAriasSmith1998}
A. Edelman, T. A. Arias, and S. T. Smith,
\newblock The geometry of algorithms with orthogonality constraints,
\newblock \emph{SIAM Journal on Matrix Analysis and Applications}, 20(2):303--353, 1998.

\bibitem{AbsilMahonySepulchre2008}
P.-A. Absil, R. Mahony, and R. Sepulchre,
\newblock \emph{Optimization Algorithms on Matrix Manifolds},
\newblock Princeton University Press, 2008.


\bibitem{AmsallemFarhat2011}
D. Amsallem and C. Farhat,
\newblock An online method for interpolating linear parametric reduced-order models,
\newblock \emph{SIAM Journal on Scientific Computing}, 33(5):2169--2198, 2011.

\bibitem{MosqueraEtAl2019}
R. Mosquera, A. El Hamidi, A. Hamdouni, and A. Falaize,
\newblock Generalization of the Neville--Aitken interpolation algorithm on Grassmann manifolds: applications to reduced order model,
\newblock arXiv:1907.02831, 2019.

\bibitem{NoakesHeinzingerPaden1989}
L. Noakes, G. Heinzinger, and B. Paden,
\newblock Cubic splines on curved spaces,
\newblock \emph{IMA Journal of Mathematical Control and Information}, 6(4):465--473, 1989.

\end{thebibliography}
\end{document}